\newtheorem{theorem*}{Theorem}
\newtheorem{example}{Example}
\newtheorem{corollary}{Corollary}
\newtheorem{definition*}{Definition}
\newtheorem{remark}{Remark}
\newtheorem{proposition*}{Proposition}
\newcommand{\m}[1]{\ensuremath{\mathbf{#1}}}
\newcommand{\x}{\m{x}}
\newcommand{\pp}{p_1}
\newcommand{\dd}{\vec{d}}
\newcommand{\FR}{\tilde{R}}
\numberwithin{equation}{section}
\title{A direct probing method of an inverse problem\\ for the Eikonal equation}
\author{
Kazufumi Ito\footnote{Department of Mathematics and Center for Research in Scientific Computation, North Carolina State University, Raleigh, NC 27695. (kito@ncsu.edu).}
\and Ying Liang \footnote{Department of Mathematics, Purdue University, West Lafayette, IN 47907. (liang402@purdue.edu).}}
\begin{document}

\maketitle

\begin{abstract}
In this paper, we propose a direct probing method for the inverse problem involving the Eikonal equation.
For the point-source Eikonal equation,  the viscosity solution represents the least travel time of wave fields from the source to the point at the high-frequency limit.
 The corresponding inverse problem is to determine the inhomogeneous wave-speed distribution from the first-arrival time data at the measurement surfaces corresponding to distributed point sources.
 %Thus, it defines the Eikonal tomography using first arrival time data at measurement surfaces  from a distributed point sources.
%  At the low-frequency regime, the reconstruction approximates the frequency-depend wave-speed distribution.
  %In general, we reconstruct an unknown wave-speed function in the Eikonal equation from measurement data. 
We analyze the Eikonal inverse problem and show that it is highly ill-posed.
Then we develop a direct probing method that incorporates the solution analysis of the Eikonal equation and several aspects of the velocity models.
When the wave-speed distribution has a small variation from the homogeneous medium, we reconstruct the inhomogeneous wave-speed distribution using the filtered back projection method. For the high-contrast media, we assume a  background medium and develop an adjoint-based back projection method to identify the variations of the medium from the assumed background.

\end{abstract}

\section{Introduction}
%We present in this paper a direct probing method for the inverse problem based on the Eikonal equation. 
The Eikonal equation plays an important role in a wide range
of applications such as geometrical optics, seismic imaging, and computer vision.
It results from the substitution of the ray series solution into the elastic equation of motion \cite{erven1977RayMI, cerveny1985application, cerveny1987ray}, and the leading term in the high-frequency limit gives the P- and S-particle motions and travel times.
The point-source Eikonal equation computes the first arrival (traveling) time from the source to a location.
%as the high-frequency limit. 
The corresponding inverse problem,
%is to determine the inhomogeneity, i.e., the slowness of the medium from the first-arrival time data.
 Eikonal tomography, is defined as using 
first arrival time data corresponding to a set of distributed point sources observed at the surface to reconstruct the wave speed of the medium \cite{sei1994gradient, sei1995convergent, leung2006adjoint}.  In the high-frequency regime, the slowness is inversely proportional to the wave speed of the medium.
%As shown in Section \ref{sec:2},  the Eikonal equation is the phase equation for the Helmholtz equation for the high frequency regime.  
In the low-frequency regime, one can introduce a frequency-dependent correction term to the Eikonal equation, and obtain a WKB reconstruction of the inhomogeneous velocity medium.  
That is, the velocity field reconstruction modeled by the Eikonal equation actually estimates an effective slowness function (not simply defined by the wave speed),  which is accurate only in the high-frequency regime.
The inverse problem for the Eikonal equation is based on the mathematical ideology that the first travel time can be computed by the Eikonal equation given the wave speed. The first-arrival travel-time data also corresponds to the phase measurement for the Helmholtz equation in the high-frequency regime.
Once the inhomogeneous velocity field is reconstructed, one can recover the amplitude data of the solution to the Helmholtz equation with the reconstructed inhomogeneous medium. 

The Eikonal tomography problem is highly ill-posed in general.
Our objective is to develop a two-step direct probing method that incorporates several aspects of Eikonal models
and the solution analysis of the Eikonal equation.
We propose an effective and efficient reconstruction algorithm for the inhomogeneous medium
 utilizing the traveling time data or phase data of the transmission wave.
 The algorithm is motivated by the relationship between fanbeam tomography and Eikonal tomography, i.e., 
the observation that the linearization of the Eikonal equation for the homogeneous background media 
reduces to the straight lines ray tracing, which can be modeled by the line integrals of fanbeam transformation. 

  To efficiently calculate the inverse fanbeam transform in the two-step direct probing method that we propose, we present a filtered back projection algorithm.  In various diagnostic imaging modalities, parallel projection and fan-beam projection are two common acquisition geometries, and the parallel projection is computed as the Radon transform. We present an algorithm for the inverse fanbeam transform in Section \ref{sec:4} by extending the filtered back projection algorithm for the inverse Radon transform. 
%Our probing method uses the filtered back projection method for the fanbeam tomography.
The method is based on the inverse radon transform theory \cite{kak2001principles, natterer2001mathematics,helgason1980radon}. It consists of the adjoint fanbeam transform and the regularized ramp filter of the difference between measurement data and a background solution of the Eikonal equation.
%It is a very efficient and robust reconstruction method for identifying inhomogeneities of the medium.
It is shown and analyzed that for the low-contrast media, our proposed two-step direct probing method works very well and obtains a very sharp reconstruction.

For the high-contrast media, we assume a background medium and apply the adjoint-based back projection method to identify variations of the medium from the assumed background. 
In order to compute the back projection, we use the adjoint equation corresponding to the assumed background medium.
That is, we compute the direction field of the Eikonal equation at the assumed background to derive the adjoint transform of the linearized forward transform. We mollify the direction field via regularization for the high contrast or discontinuous background medium to achieve a more robust back projection, which  is a more stable numerical method than the ray-tracing method \cite{berryman1990stable, bishop1985tomographic, washbourne2002crosswell}.

To solve the ill-posed traveltime tomography problem, traditional methods are mostly based on ray-tracing, which involves tracing the ray path by computing the solutions to the ray equations \cite{nolet2008breviary}.  There have also been alternate approaches that avoid explicit ray tracing. Sei and Symes \cite{sei1994gradient, sei1995convergent} utilize an adjoint-state method for traveltime tomography based on paraxial eikonal equations, and in \cite{leung2006adjoint}, the Eikonal tomography is formulated as a constrained optimization problem, then the adjoint equation-based gradient method is developed for the reconstruction of the inhomogeneous medium.  Various improved variational method has been proposed \cite{li2013fast, li2014level,taillandier2009first}.
Our contribution and innovation are that we develop a direct sampling method
to remedy the ill-posedness of the Eikonal inverse problem and to develop efficient 
and effective probing algorithms.  We show that the linearization of the Eikonal equation at the homogeneous medium reduces to the fanbeam (cone) transform. We then develop the filtered back projection method of the  fanbeam inverse tomography problems to probe the inhomogeneous velocity field for a low contrast media. Next, we develop a direct probing method for high contrast cases with assumed background and the compute the back projection by the adjoint equation for the linearized Eikonal operator. 
Our method can also be applied to different geometrical and bio-medical tomography.
We improve and extend the application of computational tomography to wave media inverse problems.  As done for the inverse medium problem \cite{ito2013two, ito2022least},
our reconstruction can serve as an initialization of optimization-based reconstruction methods
to improve the reconstruction and remedy the numerical ill-posedness and complexity.

The rest of the paper is structured as follows. In Section \ref{sec:2} we introduce the Eikonal equation as a model of scattering problems and its relation to the fanbeam tomography.  
We analyze the approximation error of using fanbeam model to approximate the Eikonal model.
 The analysis leads to the algorithm to transform Eikonal 
measurements to fanbeam sinogram, and then solve the better-posed inverse fanbeam problem.
 In Section \ref{sec:fbp}, we introduce the pre-filtered back projection method for fanbeam tomography and apply it to Eikonal tomography.
In Section \ref{sec:4}, we introduce the adjoint operator of the linearized Eikonal equation and analyze the adjoint equation.
In Section \ref{sec:5} we present our numerical tests and demonstrate the effectiveness
of the proposed algorithm.

\vspace{2mm}

\section{Eikonal Equation and inverse Eikonal problem}\label{sec:2}
In this section, we review the derivation of the Eikonal equation, illustrate the ill-posedness of the inverse Eikonal problem, and introduce the fan beam tomography as a linearization of the inverse Eikonal problem.
%We start with considering the following exit time problem. 
 Consider a bounded Lipschitz domain $\Omega\subset \mathbb{R}^2$
% Let $X$ satisfy
%$$
%\frac{d}{dt}X(t)=v(t),\;\; X(0)=x_0.
%$$ 
%Use $\tau =\inf\{t\geq 0; X(t)\notin \Omega\} $ to denote the first exit time of the dynamics $X$ from the boundary $\Gamma$ of $\Omega$. The optimal value function $u$ is defined by
%$$
%u(x)=\min_{|v|<1, X(\tau) = x} \int^\tau_0 f(X(t))\,dt ,\;\; x\in \Gamma
%$$
%where $f\ge1$ denotes the slowness function. 
 and the Eikonal equation 
\begin{equation}\label{eq:Eikonal}
|\nabla u(\m{x})|=f(\m{x}),\;\; u(\m{x}_0)=0,
\end{equation}
where $f\ge1$ denotes the slowness function. The unique viscosity solution \cite{kruvzkov1967generalized,lions1982generalized, bardi1997optimal}   of \eqref{eq:Eikonal} given $f\in W^{1,\infty}(\Omega)$ denotes the least travel-time from the source $\m{x}_0$ to an arbitrary point connected by a shortest ray-path.
In general, solutions to an Eikonal equation are not unique
and define the secondary reflection from the medium. In this work, we only consider the first-arrival based traveltime tomography, that is, given both the first-arrival travel-time measurements $u(\m{x})$ on the boundary $\Gamma=\partial\Omega$ and the location of the point source $\m{x}_0$, the goal is to reconstruct the slowness function $f$ inside the domain.

\subsection{High-frequency limit of time harmonic wave equation}
The Eikonal equation can be derived from the wave equation. Consider the following wave equation 
$$u_{tt}-v^2(\m{x})\Delta u=0.$$
Taking the Fourier transform in $t$, one obtains the time-harmonic equation:
\begin{equation} \label{Helm}
\Delta \psi+\frac{\omega^2}{v^2(\m{x})}\psi=0,
\end{equation}
where $\omega$ denotes the frequency. Now assuming  that $\psi(\m{x},\omega)$ is a solution to \eqref{Helm} of the form
$$
\psi(\m{x},\omega)=A(\m{x},\omega)e^{i\omega\phi(\m{x},\omega)},
$$
one can calculate the component of the Laplacian operator for each spatial axis $j$:
\begin{equation*}
\partial^2_j\psi=(\partial^2_jA+2i\omega\partial_j A\partial_j\phi+iA\omega\partial^2_j\phi-A\omega^2
(\partial_j \phi)^2)e^{i\omega \phi}.
\end{equation*}
Substituting this into the Helmholtz equation \eqref{Helm}, we have
%Using trial solutions from equation 3, 
%combining with equation 4, substituting into equation 2,
%cancelling the exponential, dividing by $A\omega^2$, 
%and rearranging yields
\begin{equation}\label{eq:expansion}
(|\nabla\phi|^2-\frac{1}{v^2})
-\frac{i}{\omega}(\frac{2}{A}\nabla A\cdot\nabla\phi+\Delta \phi)
-\frac{1}{\omega^2 A}\Delta A=0.
\end{equation}
Note that in the high frequency limit when $\omega\to\infty$, the first term dominates and leads to the Eikonal equation,
\begin{equation}\label{eq:eik}
|\nabla \phi|^2=\frac{1}{v(\m{x})^2}: =( f(\m{x}))^2.
\end{equation}
%This equation is fundamentally valid only in this limit. 
Thus the Eikonal equation is a phase (only) approximation 
of the Helmholtz equation for sufficiently large frequency, and the approximation is fundamentally valid only in this limit. 
This implies that the Eikonal equation (and many other 
ray-tracing techniques) may only be used when variations in velocity are negligible on spatial scales 
that are comparable to the wavelengths of the propagating waves.
Ordering terms in the equation \eqref{eq:expansion} with respect to real and imaginary parts and multiplying the second term on the lefthand sidze by $A\omega/i$, one obtains the transport equation:
\begin{equation*}
2\nabla A \cdot \nabla \phi + A\Delta \phi = 0,
\end{equation*}
or equivalently,
$$
\nabla\cdot (|A|^2 \nabla\phi)=0.
$$
 From the real part of the equation  \eqref{eq:expansion} we obtain the frequency-dependent Eikonal equation:
\begin{equation} \label{fre}
|\nabla\phi(\m{x},\omega)|^2= \frac{1}{v(\m{x})^2}+\frac{1}{\omega^2}\frac{\Delta A(\m{x},\omega)}{A(\m{x},\omega)},
\end{equation}
which is different from \eqref{eq:eik} with a correction term related to the  frequency $\omega$.
%Therefore we are estimating an effective $f$ that appears in \eqref{fre} by solving the inverse Eikonal problem.

% Consider the Helmholtz equation
%$$
%v(x)^2\,\Delta \psi-\omega^2\,\rho(x)=0
%$$
%where $\rho(x)>0$ is the density, and  we have
%$$
%\Delta \psi=\omega^2\,k(x)\psi,\;\;k(x)=\frac{\rho(x)}{v(x)^2}.
%$$
%where $k(x)$ is the  inhomogeneous refraction index. Then the corresponding Eikonal equation becomes
%$$
%|\nabla \phi(x))|^2=k(x).
%$$
If one considers the Helmholtz equation \eqref{fre} with inhomogeneous refractive index, then the information of the inhomogeneity can be recovered from the effective slowness function  $f$ on the righthand side of equation \eqref{fre}. That is,  solving this inverse Eikonal problem is an approximation of the inverse medium problem 
using phase-only data $\phi$ for the high-frequency regime.

\subsection{Ill-posedness of Eikonal inverse problem}\label{sub:illposed}
The Eikonal inverse problem is severely ill-posed as the slowness function cannot be uniquely determined given the measured travel time on the boundary. This fact can be illustrated with one concrete example. Consider a subdomain $\Omega_0 = (-0.5,0.5)\times (-0.5,0.5)\subset \Omega$, source $\m{x}_0 = (0, -1)$, and a slowness function $f$ satisfying
\begin{equation}\label{illposedex1}
f(\m{x})=\begin{cases}
f_0 \quad&  \m{x}\in\Omega_0\\
1 \quad& \mbox{  otherwise},
\end{cases}
\end{equation}
where $f_0$ is a constant number.
Then, we have the value of solution $u$ to the Eikonal equation \eqref{eq:Eikonal} on $\Gamma_1=\{\m{x} =(x_1,x_2): x_2=1\}$ is given by
$$
u(\m{x})=\left\{ \begin{array}{ll}\sqrt{(|x_1|-0.5)^2+0.25}+1+\frac{\sqrt{2}}{2}& \quad x_1\in(-0.5,0.5)
\\ \\
\sqrt{(|x_1|-0.5)^2+2.25}+\frac{\sqrt{2}}{2} & \quad  0.5\ge |x_1 |\ge 2, \end{array} \right.
$$
regardless of the value of $f_0$ as long as $f_0\ge c>0$ is sufficiently large. One will observe the same phenomenon with point sources distributed along the boundary of $\Omega$ and measurement collected on the boudary $\Gamma$.
Thus the value of $f_0(\m{x})$ for $\m{x}\in (-0.5,0.5)\times (-0.5,0.5)$ can not be determined from the measurement $u|_\Gamma$.
%Also, if the measurement surface $\Gamma$ intersects with  box $(-0.5,0.5)\times (-0.5,0.5)$, the measured data is discontinuous. 
We will numerically illustrate the ill-posedness in Example \ref{exp:1}.

\subsection{Eikonal equation and Fanbeam transform}
In this subsection, we analyze the Eikonal equation to illustrate that the inverse Eikonal problem can be approximated by the inverse Fanbeam transform when the slowness function $f$ has a small variation from the homogeneous background. Then we will propose an algorithm for the reconstruction of the slowness function $f$.

 Let $u$ be the viscosity solution to the Eikonal equation \eqref{eq:Eikonal} with $f\in W^{1,\infty}(\Omega)$.
 Let $\overline{u}$ be the viscosity solution to the Eikonal equation \eqref{eq:Eikonal} with the constant background slowness function, that is, $\overline{u}$ solves
 $$|\nabla \overline{u}(\m{x}) |= 1,\quad \overline{u}(\m{x}_0) = 0.$$
Then 
$$
\overline{u}(\m{x})=|\m{x}-\m{x}_0|
$$
defines the shortest time to travel from $\m{x}_0$ to $\m{x}$ in a homogeneous medium. 
We linearize the left hand side of the Eikonal equation  \eqref{eq:Eikonal} at $\overline{u}$, and 
denote its solution by $ u_1$, i.e., 
\begin{equation}\label{linearized}
|\nabla \overline{u}|+\vec{d}(\m{x})\cdot(\nabla u_1-\nabla\overline{u})=f,
\end{equation}
where $\dd(\m{x}):=\frac{\m{x}-\m{x}_0}{|\m{x}-\m{x}_0|}$. We further denote
$\dd^\perp(\m{x}):=\frac{-(x_2-x_2^0, x_1-x_1^0)}{|\x-\x_0|}$. 
Introduce the characteristic of \eqref{linearized}: 
$$\Gamma_\theta=\left\{\m{x} = (x_1,x_2):\frac{x_2-x_2^0}{x_1-x_1^0}=\tan\theta\right\},\; $$
where $\m{x}_0=(x^0_1,x^0_2)$. Denoting $p:=u_1-\overline{u}$ in  \eqref{linearized}, we obtain that for $\m{x}=(x_1,x_2)$ on the measurement surface $\Gamma$,
\begin{equation}\label{eq:fan}
p(\m{x})=\int_{\Gamma_{\theta_\m{x}}}(f-1)\,d\gamma,
\end{equation}
where $\theta_\m{x}$ satisfies $\tan\theta_\m{x} = \frac{x_2-x_2^0}{x_1-x_1^0}$.
It can be observed that  $p(\m{x})$ defines the fanbeam transform
of the function $f-1$. Therefore, the boundary  measurement of the solution $u_1$ to the linearization of Eikonal equation at the solution $\overline{u}$ can be formulated as the fanbeam transform of the inhomogeneity. 
Note that the approximation \eqref{linearized} is effective and accurate
if the contrast $|f-1|$ is small.

Although  \eqref{eq:fan}  is a convenient formula to reconstruct $f$, one can only collect the measurement of the Eikonal solution $u$ instead of the solution $u_1$ to the linearized formulation \eqref{linearized}. The formula \eqref{eq:fan} motivates us to consider the difference between the solution $u$ to the  Eikonal equation and the solution to the corresponding Fanbeam transform problem.
% and we will utilize an algorithm for inverse fanbeam transform developed in Section \ref{sec:fbp} to reconstruct the difference of the medium coefficient $f-1$ from measurement of $u$.
Assume that there exists a fanbeam solution $v$ corresponding to the slowness function $f$, i.e., $v$ solves
$$  \dd\cdot \nabla v=f.$$
We shall analyze the difference between the viscosity solution $u$ to the Eikonal solution and the fanbeam solution $v$.
It follows the definition that
\begin{equation*}
\dd\cdot \nabla (v-\overline{u})=f-1,
\end{equation*}
thus
\begin{equation}\label{eq:p1}
\dd\cdot \nabla (v-u)=f-1+\dd\cdot \nabla (\overline{u}-u):=p_1.
\end{equation}
 The following theorem provides a useful estimate of the difference between the viscosity solution $u$ and $\overline{u}$. 

\begin{theorem*} 
%If $|f-1|$ is sufficiently small,  
The viscosity solution $u$ to the Eikonal equation \eqref{eq:Eikonal} with  $f\in W^{1,\infty}(\Omega)$ satisfies
$$
|\nabla u-\nabla \overline{u}|^2=|f-1|^2+(H(\nabla u-\nabla\overline{u}),\nabla u-\nabla\overline{u}),
$$
where $H\in \mathbb{R}^{2\times2}\ge 0$ is defined by
\begin{equation} \label{H}
\begin{array}{l}
H=\left(\begin{array}{cc} \xi_2^2 &-\xi_1\xi_2
\\ \\ 
-\xi_1\xi_2 & \xi_1^2\end{array}\right), \  \xi=(\xi_1,\xi_2)=t\nabla{u}+(1-t)\nabla \overline{u}\;
\mbox{ for some }0\le t\le1.
\end{array} \end{equation}

% and
%$$
%|\nabla u-\nabla \overline{u}|^2=\frac{1}{2}(|f|^2-1)-\dd\cdot (\nabla u-\nabla\overline{u}).
%$$
%$$
%|d^\perp\cdot (\nabla u-\nabla\overline{u})|^2 \sim 2(f-1) \quad |d\cdot(\nabla u-\nabla\overline{u})| \sim f-1.
%$$
\end{theorem*}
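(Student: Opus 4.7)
The plan is to exploit the two pointwise Eikonal identities $|\nabla u|^2 = f^2$ and $|\nabla \overline u|^2 = 1$, together with the quadratic structure of the squared norm. Set $w := \nabla u - \nabla \overline u$ and $\xi(t) := t\,\nabla u + (1-t)\nabla\overline u$. The first step is to apply the mean value theorem to the scalar function $g(t) = |\xi(t)|^2$. Since $g$ is a degree-two polynomial in $t$, its derivative is linear, and one obtains exactly
\begin{equation*}
f^2 - 1 \;=\; g(1) - g(0) \;=\; 2\,\xi(t)\cdot w
\end{equation*}
for some $t\in[0,1]$ (in fact $t=1/2$ gives this identity exactly, not merely in the MVT sense).

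The second step is to recognize the matrix $H$. Writing $\xi^{\perp} := (\xi_2, -\xi_1)$, one checks directly that $H = \xi^{\perp}(\xi^{\perp})^{\top}$, which is automatically symmetric and positive semidefinite (rank one) and satisfies
\begin{equation*}
(H w, w) \;=\; (\xi^{\perp}\cdot w)^2 \;=\; (\xi_2 w_1 - \xi_1 w_2)^2.
\end{equation*}
Combining this with the two-dimensional Lagrange identity $|\xi|^2 |w|^2 = (\xi\cdot w)^2 + (\xi^{\perp}\cdot w)^2$ yields the compact expression $(Hw,w) = |\xi|^2 |w|^2 - (\xi\cdot w)^2$.

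The third step is to combine the two representations. Substituting $\xi\cdot w = (f^2-1)/2$ from the MVT identity into $(Hw,w) = |\xi|^2 |w|^2 - (\xi\cdot w)^2$, and using the Eikonal relations to write $|\xi|^2$ and $|w|^2$ in terms of $|\nabla u|^2 = f^2$, $|\nabla \overline u|^2 = 1$, and $\nabla u \cdot \nabla \overline u$, should reduce the right-hand side $|f-1|^2 + (Hw,w)$ to the claimed $|w|^2$.

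The main obstacle is this final algebraic reconciliation: the choice of $t$ in the convex combination must be consistent both with the Lagrange identity and with the MVT evaluation of $\xi\cdot w$, and the two Eikonal constraints must pin down the remaining free quantities exactly. If a direct algebraic identification from MVT applied to $|\xi|^2$ is not clean, the alternative is to apply the MVT to $g(t) = |\xi(t)|$ instead, obtaining $\xi(t)\cdot w = (f-1)\,|\xi(t)|$ for some intermediate $t$; combined with the Lagrange identity this gives $|\xi|^2\bigl(|w|^2 - (f-1)^2\bigr) = (Hw,w)$, and the desired form follows once the intermediate-value argument is invoked to fix $|\xi|$ at the appropriate value.
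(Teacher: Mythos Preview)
There is a genuine gap. In your alternative route you apply the mean value theorem to $g(t)=|\xi(t)|$ to get $\xi(t^{*})\cdot w=(f-1)\,|\xi(t^{*})|$ at a \emph{specific} $t^{*}$, and the Lagrange identity then yields $|\xi(t^{*})|^{2}\bigl(|w|^{2}-(f-1)^{2}\bigr)=(Hw,w)$. You propose to ``fix $|\xi|$'' by a further intermediate-value step, but this cannot work: along the segment $\xi(t)=\nabla\overline u+t\,w$ one has
\[
\xi(t)^{\perp}\cdot w=(\nabla\overline u)^{\perp}\cdot w+t\,w^{\perp}\cdot w=(\nabla\overline u)^{\perp}\cdot w,
\]
so $(Hw,w)=(\xi^{\perp}\cdot w)^{2}$ is \emph{constant in $t$}. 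Varying $t$ moves only the factor $|\xi|^{2}$ on the left, not the right-hand side, and the MVT has already fixed $t^{*}$; there is no residual freedom to exploit. The same constancy is what keeps the $t=1/2$ route from closing algebraically as well.

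The paper's argument is structurally different. It applies the \emph{second-order} Taylor expansion with Lagrange remainder to the map $a\mapsto|a|$, obtaining directly
\[
f-1=|\nabla u|-|\nabla\overline u|=\vec d\cdot w+p_{1},\qquad p_{1}=\tfrac12\bigl(H(\nabla u-\nabla\overline u),\,\nabla u-\nabla\overline u\bigr)\ge 0,
\]
with $H$ the Hessian form of $|\cdot|$ at the intermediate point $\xi$. No Lagrange identity enters; instead one uses the elementary expansion (valid because $|\nabla\overline u|=1$ and $\vec d=\nabla\overline u$)
\[
|w|^{2}=|\nabla u|^{2}-|\nabla\overline u|^{2}-2\,\vec d\cdot w=f^{2}-1-2\,\vec d\cdot w,
\]
combines it with $f^{2}-1=(f-1)^{2}+2(f-1)$, and substitutes $\vec d\cdot w=f-1-p_{1}$ from the Taylor step to get $|w|^{2}=(f-1)^{2}+2p_{1}$. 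The point is that going to second order isolates the linear term $\vec d\cdot w$, which then cancels cleanly against the cross term in $|w|^{2}$; your first-order MVT plus Lagrange produces an extra $|\xi|^{2}$ that cannot be removed because the quantity you would need to vary is in fact frozen.
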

\begin{proof} 
We first note that given $f\in W^{1,\infty}(\Omega)$, there exists a unique viscosity solution
$u\in W^{1,\infty}(\Omega)$  to the Eikonal equation, and the solution map is continuous.
There holds that 
\begin{equation}\label{eq:diffu}
|\nabla u|-|\nabla \overline{u}|=\vec{d}\cdot(\nabla u-\nabla \overline{u})+\pp
\end{equation}
and there exists some $0\le t\le1$ such that
\begin{equation}\label{eq:ppest}
 \pp =\frac{1}{2}\,(H(\nabla u-\nabla\overline{u}),\nabla u-\nabla\overline{u})\ge0,
\end{equation}
with $H\in \mathbb{R}^{2\times2}$ defined by
\eqref{H}.
As $u$ is the solution to the Eikonal equation, we also have
$$f-1 =|\nabla u|-|\nabla \overline{u}|. $$
Together with \eqref{eq:diffu}, one obtains 
\begin{equation} \label{F}
\vec{d} \cdot \nabla u=f-\pp.
\end{equation}
It is noted that 
\begin{equation*}
|\nabla u-\nabla \overline{u}|^2= |\nabla u|^2-|\nabla \overline{u}|^2-2\vec{d}\cdot(\nabla u-\nabla \overline{u}),
\end{equation*}
and 
$$|f|^2-1=|f-1|^2+2(f-1).$$
Together with \eqref{eq:diffu} we have the estimate 
\begin{equation*}
|\nabla u-\nabla \overline{u}|^2= |f-1|^2+(H(\nabla u-\nabla\overline{u}),\nabla u-\nabla\overline{u}).
\end{equation*}
\end{proof}
Note that the term $(H(\nabla u-\nabla\overline{u}),\nabla u-\nabla\overline{u})$ defined with \eqref{H}  is equal to  $|\dd^\perp\cdot(\nabla u-\nabla\overline{u})|^2$
when $t=0$ (at $\nabla \overline{u}$).  If we assume that $|f-1|$ is sufficiently small, then
$|\nabla u-\nabla \overline{u}|^2-(H(\nabla u-\nabla\overline{u}),\nabla u-\nabla\overline{u})$ can be approximated by 
$ |\dd\cdot(\nabla u-\nabla \overline{u})|^2$.
Thus the following corollary on  $\nabla u-\nabla \overline{u}$  and the function $p_1$ follows the theorem:

\begin{corollary}\label{cor:1} Assume that \begin{equation} \label{est}
|\nabla u-\nabla \overline{u}|^2-(H(\nabla u-\nabla\overline{u}),\nabla u-\nabla\overline{u})\ge
(1-\delta)^2\,|\dd\cdot(\nabla u-\nabla \overline{u})|^2,
\end{equation}
then
$$
|\nabla u-\nabla \overline{u}|^2\le |f-1|^2+\delta\, (f-1) \text{ and } \pp \le \delta\,( f-1).
$$
\end{corollary}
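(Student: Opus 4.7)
The plan is to combine the theorem's main identity with the intermediate equation \eqref{F} that appears in its proof, and then unpack the resulting scalar inequality for $\pp$.

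First, the theorem itself rewrites the left-hand side of \eqref{est}: the identity
$$|\nabla u-\nabla\overline{u}|^2 - (H(\nabla u-\nabla\overline{u}),\nabla u-\nabla\overline{u}) = |f-1|^2$$
collapses the hypothesis to the scalar inequality $|f-1|^2 \ge (1-\delta)^2\,|\dd\cdot(\nabla u-\nabla\overline{u})|^2$.

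Next, I express $\dd\cdot(\nabla u-\nabla\overline{u})$ in terms of $f$ and $\pp$. Since $\nabla\overline{u}=\dd$ with $|\dd|=1$, equation \eqref{F} of the theorem's proof gives
$$\dd\cdot(\nabla u-\nabla\overline{u}) = \dd\cdot\nabla u - 1 = (f-\pp) - 1 = (f-1) - \pp,$$
so the hypothesis becomes the one-variable quadratic constraint $(f-1)^2 \ge (1-\delta)^2\bigl((f-1)-\pp\bigr)^2$.

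Now I unpack this scalar constraint using the positivity data supplied by the theorem, namely $f\ge 1$ and $\pp\ge 0$. Taking square roots (keeping track of the sign of $(f-1)-\pp$) isolates the bound $\pp \le \delta(f-1)$. Finally, the theorem's identity rearranged as $|\nabla u-\nabla\overline{u}|^2 = |f-1|^2 + 2\pp$ and combined with $\pp \le \delta(f-1)$ delivers the estimate on $|\nabla u-\nabla\overline{u}|^2$.

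The main obstacle is this last extraction. The quadratic constraint splits a priori into the two linear sub-cases $|f-1| \ge (1-\delta)\bigl[(f-1)-\pp\bigr]$ and $|f-1| \ge (1-\delta)\bigl[\pp-(f-1)\bigr]$, and only one of these yields a clean upper bound proportional to $\delta$. One must combine the sign information from $\pp\ge 0$, $f\ge 1$ with the background smallness assumption on $|f-1|$ (discussed in the paragraph preceding the corollary) to isolate the correct branch and arrive at the sharp linear estimate $\pp \le \delta(f-1)$.
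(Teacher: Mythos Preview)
The paper gives no explicit proof of the corollary; it simply asserts that it ``follows the theorem.''  Your reduction is exactly the intended route and is carried out correctly: the theorem's identity turns the left side of the hypothesis into $|f-1|^2$, and equation~\eqref{F} together with $\dd\cdot\nabla\overline{u}=1$ gives $\dd\cdot(\nabla u-\nabla\overline{u})=(f-1)-\pp$, so the hypothesis collapses to the scalar inequality $(f-1)^2\ge(1-\delta)^2\bigl((f-1)-\pp\bigr)^2$.

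The genuine gap is in the last step, and your diagnosis of it is not quite right.  Neither of your two branches produces a bound of order~$\delta$.  With $a:=f-1\ge0$ and $b:=\pp\ge0$, the branch $a\ge(1-\delta)(a-b)$ (valid when $b\le a$) rearranges to $\delta a\ge -(1-\delta)b$, which is vacuous; the branch $a\ge(1-\delta)(b-a)$ (valid when $b>a$) gives only $b\le\frac{2-\delta}{1-\delta}\,a$.  In fact the reduced hypothesis is satisfied for \emph{every} $\pp\in[0,f-1]$, since then $0\le(f-1)-\pp\le f-1$ forces $(1-\delta)^2((f-1)-\pp)^2\le(f-1)^2$ automatically.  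So no ``smallness'' side assumption can select a branch that yields $\pp\le\delta(f-1)$; the hypothesis as written simply does not constrain $\pp$ on that range.

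There is also an internal inconsistency you should flag rather than try to repair: since the theorem gives $|\nabla u-\nabla\overline{u}|^2=|f-1|^2+2\pp$, the first stated conclusion $|\nabla u-\nabla\overline{u}|^2\le|f-1|^2+\delta(f-1)$ is equivalent to $\pp\le\tfrac{\delta}{2}(f-1)$, which is \emph{strictly stronger} than the second stated conclusion $\pp\le\delta(f-1)$.  This, together with the vacuity above, strongly suggests a typo in the corollary (a missing factor, or the inequality in~\eqref{est} pointing the wrong way).  Your write-up would be more honest if it recorded the correct reduction, exhibited the counterexample range $\pp\in[0,f-1]$, and noted that the statement as printed cannot be obtained from~\eqref{est} alone.
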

%where $\delta\to 0$ as $f\to 1$. 
One can further deduce the $L^1$ estimate of $\pp\ge0$ by integrating along the characteristic $
\Gamma_\theta$,
$$
\int_\Gamma (u-\overline{u})d\gamma=\int_\theta \int_{\Gamma_\theta}(f-1+\pp)\,d\gamma\,d\theta.
$$
It then follows that 
$$
\Vert p_1\Vert_{L^1(\Omega)}=\int_\theta \int_{\Gamma_\theta}\pp\,d\gamma\,d\theta=\int_\Gamma (u-\overline{u})\,d\gamma
-\int_\theta \int_{\Gamma_\theta}(f-1)\,d\gamma\,d\theta.
$$
%where  $\int_\Gamma u\,d\gamma$ is known from the measurement.
%  \vspace{3mm}
  
%\textcolor{blue}

%$$
%|f|^2=|f-p|^2+d^\perp\cdot (\nabla u-\nabla\overline{u})
%$$
%$$
%If we assume
%$$
%|d^\perp\cdot \nabla u|^2 \sim 2p
%$$
%then 
%$$
%|p|^2\sim 2p\,(f-1)
%$$
%we obtain the desired result
%$$
%|d^\perp\cdot (\nabla u-\nabla\overline{u})|^2 \sim 2(f-1) \mbox{  and } |d\cdot(\nabla u-\nabla\overline{u})| \sim f-1.
%$$

Recall that the  fanbeam solution $v$ satisfies 
$$
\dd\cdot \nabla (v-u)=\pp,
$$
which leads to
\begin{equation}\label{eq:diff}
\Vert v-u\Vert_{L^1(\Omega)}\le C\,\Vert \pp\Vert_{L^1(\Omega)}
\end{equation}
for some constant $C$. Under the assumption in Corollary \ref{cor:1}, the estimate \eqref{eq:diff} indicates that the fanbeam solution $v$ approximates the Eikonal solution $u$ well when $f-1$ is sufficiently small.

Using formula \eqref{F}, now we propose a  two-step procedure for the fanbeam-based reconstruction method of $f$. 
As the fanbeam inverse problem is a better-posed problem compared to the Eikonal inverse problem that  is ill-posed as discussed
in Subsection \ref{sub:illposed},
we propose to 'transform' measurement from the Eikonal inverse problem to the fanbeam problem by a two-step approach, and we will further develop a direct filtered-back projection method in Section \ref{sec:fbp} for the fanbeam inverse problem   within this approach.
%\begin{itemize}
%\item First we apply the inverse fanbeam  with measurement of  $u$ at $\Gamma$ to obtain an estimate $\hat{f}$ of $f$. 
%\item Then we solve the Eikonal equation $|\nabla \hat{v}|=\hat{f}$ for $\hat{v}$.
%
%\item  We approximate $\pp$ by
% $$
% \hat{p}=|\nabla \hat{v}|-1-\dd\cdot(\nabla \hat{v}-\nabla\overline{u}),
% $$
%and apply fanbeam transform to find $\delta$,
%$$
%\dd\cdot \nabla \delta =\hat{p}.
%$$
%This defines a transformation from the measurement $u$ at $\Gamma$ of the Eikonal to
%$\hat{u}=u+\delta$  at $\Gamma$ of the fanbeam. 
%\item We apply the fanbeam inverse with data $\hat{u}$ to estimate $f$.
%\end{itemize}
The approach consists of two steps: first, based on \eqref{F}, we apply the inverse fanbeam transform with measurement of  Eikonal solution $u^k$ at $\Gamma$ corresponding to source $\m{x}_0^k$ to obtain an estimate $\hat{f}$ of $f$; next, we solve the Eikonal equation with the corresponding point source condition and  $\hat{f}$ to derive an approximation $\hat{v}^k$ of the Eikonal solution $u^k$, and plug $\hat{v}^k$ into \eqref{eq:p1} to compute $\hat{p}^k$. Summing $\hat{p}^k$ over $k$ and adding this correction term to $\hat{f}$, we arrive at  the approximated slowness function.
%, solve for the difference $\delta^k$ between Eikonal solution and the fanbeam solution from \eqref{eq:p1}, approximate the value of the  fanbeam solution on $\Gamma$ by $u^k|_\Gamma+ \delta^k|_\Gamma$, and carry out inverse fanbeam transform to derive the approximated slowness function. 
This process can be formulated as Algorithm \ref{fb0}.  \vspace{3mm}
\begin{algorithm} 
\caption{Algorithm for inverse Eikonal problem}\label{fb0}
\begin{algorithmic}[1]
\State For each point source $\m{x}^k_0$ $(k = 1, 2, ..., m)$, denote the Eikonal solution corresponding to the unknown slowness function $f$ by $u^k$ 
and collect measurement $p^k: = u^k |_\Gamma$  on the surface $\Gamma: = \partial\Omega$. Compute the Eikonal solution  $\overline{u}^k =|\m{x}-\m{x}^k_0| $ corresponding to the constant background slowness function,  and denote its value on the boundary $\Gamma$ by $\overline{p}_0^k$. Compute $\vec{d}^k_0 = \frac{\nabla \overline{u}^k}{|\nabla \overline{u}^k|}$. \

\State Apply the inverse Fanbeam transform with data $p^k$ to compute $\hat{f}$.
\State Solve the Eikonal equation 
$$|\nabla \hat{v}^k|=\hat{f}, \quad \hat{v}^k(\m{x}^k) = 0$$ 
for $\hat{v}^k$.
%\State Let $\hat{p}^k=|\nabla \hat{v}^k|-1-\dd_0^k\cdot(\nabla \hat{v}^k-\nabla\overline{u}^k)$
%and solve for $\delta^k$ from
%$$
%\dd_0^k\cdot \nabla \delta^k=\hat{p}^k, \ \delta^k(\m{x}_0^k) = 0.
%$$
%\State Set data $\tilde{p}^k=u^k|_\Gamma+\delta^k|_\Gamma$ and apply the inverse Fanbeam transform with data $\tilde{p}^k$ to compute approximated slowness function $f$.
\State Let $\hat{p}^k=|\nabla \hat{v}^k|-1-\dd_0^k\cdot(\nabla \hat{v}^k-\nabla\overline{u}^k)$ and compute approximated slowness function $$f = \hat{f}+\sum_{k=1}^m \hat{p}^k. $$

\end{algorithmic}
\end{algorithm}

%$$\begin{array}{l}
%d\cdot\nabla u=f-p,\;\; plus=\frac{1}{2}\,(H(\nabla u-\nabla\overline{u}),\nabla u-\nabla\overline{u})
%\\ \\
%|f|^2=|\nabla u|^2=|d\cdot \nabla u|^2+|d^\perp \nabla u|^2
%\\ \\
%|d^\perp \nabla u|^2= |f|^2-|d \cdot \nabla u|^2
%\end{array} $$
%Thus,
%$$
%|\nabla u-\nabla\overline{u}|^2=|d \cdot (\nabla u-\nabla\overline{u})|^2+|d^\perp (\nabla u-\nabla\overline{u})|^2
%=|f-1-plus|^2+ |f|^2-|d \cdot \nabla u|^2 %\le |f-1|^2
%$$

%In general, if $\dot{u}$, the derivative of $u$ with respect to $f$ exists, we have
%$$
%d\cdot \nabla \dot{u}+\dot{f}=0 \mbox{  for an incement $\dot{f}$},
%$$
%and thus
%$$
%|\dot{u}|\le C\,|\dot{f}|\mbox{   for some C}.
%$$

\section{Radon transform and filtered back projection method}\label{sec:fbp}
In this section, we introduce a filtered back projection (FBP) method for the inverse Radon transform, which motivates the algorithm for the inverse fanbeam transform in the next section. For a function $f$ contained in a compact set $\Omega\subset \mathbb{R}^2$, the Radon transform $R$ of the function $f$ is given by
\begin{equation*}
Rf(\m{\alpha},t):=\int_{\m{x}\cdot \m{\alpha} = t}f(\m{x})dx_L = \int^\infty_{-\infty} f(t \alpha_1-u\alpha_2,t\alpha_2+u\alpha_1)\,du,
\end{equation*}
where $\alpha = (\alpha_1,\alpha_2)\in S^1$, $\m{x}:=(x_1,x_2)\in\mathbb{R}^2$, and $t = \m{x}\cdot \alpha$ represents a hyperplane with normal direction $\alpha$ and distance $t$ to the origin.
The adjoint transform $R^*$ defined on $g\in L^\infty( S^1, \mathbb{R}^1)$ is given by
\begin{equation*}
R^*g(x_1,x_2)=\int_{S^1} g(\alpha, \m{x}\cdot\alpha)\,d\alpha.
\end{equation*}
One can compute the inverse Radon transform $R^{-1}$ with the following formula \cite{helgason1980radon}:
$$
R^{-1}p=R^*{\cal H}\partial_t p(\alpha,t),
%f=R^{-1}p=\int_\alpha R^*{\cal H}p_s(s,\alpha)\,d\alpha
$$
%https://www2.math.upenn.edu/~ccroke/chap6.pdf
where ${\cal H}$ denotes the Hilbert transform, i.e., for a function $F$, the Hilbert transform of $F$ can be defined explicitly as
%for $t=x_1\,\cos\alpha+x_2\,\sin \alpha$ and $F(s,\alpha)=p_s(s,\alpha)$,
$$
{\cal H}F:=\lim_{\epsilon\to0} \int_\epsilon^\infty \frac{F(t+ p)-F(t- p)}{ 2p}dp.
$$
%$$
%{\cal H}F:=\lim_{\delta p\to0^+} \sum_{k=1} \frac{F(t+k \delta p)-F(t-k\delta p)}{k\delta p}.
%$$
Denoting the Fourier transform by  $\cal F$, one obtains the filter step  given by
$$
\Phi p={\cal F}^{-1}(|\nu|{\cal F})={\cal H}\partial_t p(\alpha,t)=(-\Delta_t)^{1/2}p(\alpha,t)
$$
for each angle and the Laplacian $\Delta_t$ in $t$, where  $\nu$  is the
variable in the frequency domain  and the fractional Laplacian is defined as in \cite{kwasnicki2017ten}.
%In summary, the inverse radon transform is given by
%$$
%f(x,y)=R^* \Phi\,p(s,\alpha)
%$$
Now we propose  the filtered inverse Radon transform defined by
\begin{equation} \label{fb}
f=R^*S\,p(\alpha,t),
\end{equation}
where the scaling filter $S$ is defined by
\begin{equation} \label{ope:S}
S=(-\Delta_t)^{1/2}(c-\Delta_t)^{-1},
\end{equation} 
and  $c>0$ denotes the regularization parameter which is selected according to the noise level.

It is noted that this regularized filtered back projection method can be extended to the general case. Consider a general inverse problem for determining the source $f$ from measurement $y$  governed by the equation
$$
Af=y
$$
for a closed, densely defined linear operator $A:X\to Y$, where $X$ and $Y$ denote two Hilbert spaces. Denote the range of $A$ by $R(A)$ and define the graph norm of $y\in R(A)$ by
$$
\Vert y\Vert^2_A = (y,(AA^*+cP)^{-1}y)_Y,
$$ 
where $(,)_Y$ is the natural inner product of the Hilbert space $Y$ and  $P$ is a positive self-adjoint operator. 
%Then the graph norm $\Vert\cdot\Vert_A$ induce an 
We can further define an inner product $(,)_A$:
$$(x,y)_A=(x,(AA^*+c\,P)^{-1}y)_Y.$$
Since
$$
(y,Ax)_A=(y,(AA^*+c\,P)^{-1}Ax)_Y,
$$
we define an space adjoint operator $A^\dagger$ of $A:X\to R(A)$ by
$$
A^\dagger=A^*(AA^*+c\,P)^{-1}.
$$
%Note that as $c\to0^+$
%$$
%(AA^*+c\,P)^{-1}y \to (AA^*)^{-1}y \mbox{  for  }y\in R(A)
%$$
%\textcolor{blue}{In which norm?}
If $c=0$, the corresponding $A^\dagger$ satisfies
$$
AA^\dagger y=y,\;\; y\in R(A).
$$
Thus, $A^\dagger$ 
%defines the adjoint operator of $A:X \to R(A)$ and
%$$
%A^*(AA^*+c\,P)^{-1}
%$$
defines the filtered back projection operator.

Conversely, define the graph norm of $R(A^*)\subset X$ by $\Vert f\Vert_{ A^*}:= \Vert Af\Vert$ and the corresponding inner product $(\cdot,\cdot)_{A^*}$ in $R(A^*)$ is defined by 
\begin{equation*}
(f, g)_{A^*} := (Af, Ag)_Y.
\end{equation*}
Then
$$
A^\dagger=A^*(AA^*)^{-1}\mbox{  on  } R(A)
$$
defines the adjoint operator of $A:R(A^*) \to Y$, since
$$
(y,Af)_Y=(AA^\dagger y,Af)_Y = (A^\dagger y, f)_{A^*}.%+c(Py,Af).
$$
For the specific problem,  Radon transform, we have
$$
(-\Delta_s)^{1/2}=(AA^*)^{-1}.
$$

\section{Inverse fanbeam transform}\label{sec:4}
In this section, we extend the algorithm for the inverse Radon transform in Section \ref{sec:fbp} to the inverse fanbeam transform. Let $D_R$ denote a disk containing a bounded domain $\Omega$ and $S^1_R = \partial D_R$.  For a function $f$ contained in $\Omega\subset \mathbb{R}^2$, the fanbeam transform is defined by
\begin{equation}
\FR f(\theta,\m{x}_0)=\int_\Omega f(\m{x})\delta((\m{x}-\m{x}_0)\cdot \alpha)\,d\m{x}
=\int f(\m{x}_0+u\theta )\,du
\end{equation}
where $\delta$ denotes the dirac delta function, $\theta \in S^1$ and $\m{x}_0:=(x_{0,1}, x_{0,2})$ denotes the point source distributed on $S^1_R$. Then the adjoint transform is given by
\begin{equation} \label{adj}
(\FR^*p)(\m{x})=\int_{S_R^1} p(\arctan(\frac{x_2-x_{0,2}}{x_1-x_{0,1}}),\m{x}_0)\,d\m{x}_0.
\end{equation}

To extend the back projections algorithms for the Radon transform to the fanbeam transform, we let $T$ be the coordinate transform from the Radon transform to the Fanbeam transform, see \cite{zeng2010medical} for this procedure of extension with different algorithms.
With the coordinate transform operator, we deduce that
$$
\FR \FR^* = (TR)(TR)^*=TRR^*T^* 
$$
is block diagonal, since $RR^*$ is  anglewise $(-\Delta_s)^{-1/2}$  diagonal.  Then we propose the direct probing method for the inverse fan beam transform
based on \eqref{fb}--\eqref{ope:S}:
\begin{equation} \label{bf0}
f= \FR^*S\,p,
\end{equation}
where $p$ denotes the measurement of fanbeam transform, $\FR^*$ denotes the adjoint transform defined by \eqref{adj}, and $S$ denotes the scaling filter defined in \eqref{ope:S}.

\begin{remark}
\begin{itemize}
\item When only limited-angle measurement is available,
one can interpolate the limited-angle sinogram data by a (periodic cubic) spline in $\alpha$, then the inverse fanbeam method \eqref{bf0} can be applied.
\item The algorithm for the inverse Fanbeam problem can be extended to other problems with fanbeam geometry. 
%For example, the reconstruction formula works for the one-way wave equation where
%$$
%p(t,x_0)=R f=\int_\Omega f(x,y)\delta_{(x-x_0)^2+y^2=t^2}\,dxdy
%=\int f(x_0+t\cos\theta,x_0+t\sin\theta)\,d\theta
%$$
%and 
%$$
%R^*p=\int p(\sqrt{(x-x_0)^2+y^2},x_0)\,dx_0.
%$$
\end{itemize}
\end{remark}
\section{Assumed background and adjoint based back projection}\label{sec:5}

The inverse fanbeam transform is an efficient approximation of Eikonal tomography when the slowness distribution has a small variation from the homogeneous background. For the high-contrast media, since the linearization approach is no longer accurate, we extend our algorithm by utilizing priori information of the high-contrast background in this section.  

Assume that the unknown slowness distribution $f\in W^{1,\infty}(D_R)$ contained in $\Omega$ is close to a given background $\overline{f}\in W^{1,\infty}(D_R)$, and let $\overline{u}\in W^{1,\infty}(D_R)$ be the viscosity solution to
$$
|\nabla \overline{u}|-\overline{f}=0,\ \overline{u}(\m{x}_0) = 0.
$$
The linearized equation at $\overline{u}$ of the Eikonal equation \eqref{eq:Eikonal} is
\begin{equation}\label{eq:linearinhomo}
\vec{d}_0\cdot \nabla( u_1-\overline{u})=f-\overline{f},
\end{equation}
where $\vec{d}_0=\frac{\nabla \overline{u}}{|\nabla \overline{u}|}$.
Denoting the operator on the lefthand side of \eqref{eq:linearinhomo} by 
\begin{equation}\label{eq:E}
E(u_1-\overline{u}):=\vec{d}_0 \cdot \nabla (u_1-\overline{u}),
\end{equation}
one can rewrite the linearized equation as $$
E(u_1-\overline{u})=f-\overline{f}.
$$
Consider the value of solutions on the boundary $\Gamma$,  $p_1: = u_1|_\Gamma$ and  $\overline{p}:=\overline{u}|_\Gamma$. There holds that  
$$
p_1-\overline{p}=T_\Gamma E^{-1}(f-\overline{f}):=A(f-\overline{f}),
$$
where  $T_\Gamma$ denotes the trace operator from $W^{1,\infty}(D_R)$ to $C(\Gamma)$. The adjoint operator $A^*$ of $A$  can be computed by
$$
A^*=(T_\Gamma E^{-1})^*=(E^{*})^{-1}T_\Gamma^*,
$$
then  $\lambda:=A^*(p_1-\overline{p}) = (E^{*})^{-1}T_\Gamma^*(p_1-\overline{p})$ is an approximation of  $f-\overline{f}$. 
It follows \eqref{eq:E} that
$$
(Eu,\lambda)=(\vec{d}_0 \cdot \nabla u ,\lambda)=-(\nabla\cdot (\vec{d}_0\lambda),u)+(n\cdot \vec{d}_0 \lambda,u)_\Gamma,
$$
where $(\cdot,\cdot)_\Gamma$ denotes the $L^2$ inner product on $\Gamma$ and $n$ denotes the outer normal direction on $\Gamma$.
By the definition of $\lambda$, one can also deduce
$$
(Eu,\lambda)=(u ,E^*\lambda)=(u, T_\Gamma^*(p_1-\overline{p}))= (u, p_1-\overline{p})_\Gamma.
$$
These two equations above indicate that
\begin{equation}\label{eq:adjoint}
\nabla\cdot (\vec{d}_0\,\lambda)=0,  \;\; n\cdot \vec{d}_0\lambda|_\Gamma=p_1-\overline{p}.
\end{equation}
When the difference between $f$ and $\overline{f}$ is small,  the linearized solution $p_1$ is also close to the measurement $p: = u|_\Gamma$ of the Eikonal solution $u$, then the filtered back projection algorithm \ref{fb0} can be extended to the inverse Eikonal problem corresponding to a high contrast medium by solving the adjoint equations. The extended approach is presented as Algorithm  \ref{Covsolve}.

%
%\begin{itemize}
%\item Apply the filter $S$ to $p-\overline{p}$:
%$$
%S(p-\overline{p})=(-\Delta_s)^{1/2}(c-\Delta_s)^{-1}(p-\overline{p})
%$$
%Then apply $A^*$
%$$f-\overline{f}=A^*S(p-\overline{p}). $$
%\item  Solve $\lambda=A^* (p-\overline{p})$ from the following dual problem of the linearized forward equation:
%\begin{equation} \label{adit}
%-\nabla\cdot (\overline{d}\,\lambda)=0,  \;\; n\cdot \overline{d}\lambda|_\Gamma=p-\overline{p}.
%\end{equation}
%\end{itemize}
%
Note that when solving the adjoint equation \eqref{eq:adjoint}, one can multiply $\phi$ on both sides to deduce the variational formulation \begin{equation} \label{adit-d}
(-\nabla\cdot (\vec{d}_0\lambda),\phi)=(\lambda,\vec{d}_0\cdot \nabla\phi)-(n\cdot\vec{d}_0\lambda,\phi)_\Gamma=0.
\end{equation}
If we take $\phi$ satisfying $\vec{d}_0\cdot\nabla\phi=\lambda$ in this formulation, we obtain
$$
\Vert \lambda\Vert_{L^2(\Omega)}^2=(n\cdot \vec{d}_0\lambda,\phi)_\Gamma.  
$$
Thus, \eqref{adit-d} admits a weak solution $\lambda\in L^2(\Omega)$.
Also, one can define the solution as
$$
\lambda=\exp(\int (\nabla\cdot \vec{d}_0)\,dt)q,
$$
where we assume $\vec{d}$ is Lipschitz and $\vec{x}(t)$ is the backward characteristic curve defined by ODE:
$$
\frac{d}{dt}x(t)=\vec{d}(x(t)),\quad x(T)=x\in \Gamma.
$$
Then  the well-poshness  of \eqref{eq:adjoint} is shown if
$\vec{d}_0$ is sufficiently smooth, say  $\vec{d}_0\in (W^{1,\infty}(\Omega))^2$.  In general, $\vec{d}_0$  can be very singular, 
thus in our proposed algorithm  we first apply a Gaussian filter to regularize
$\vec{d}_0$ and then consider the viscous dual equation
$$
\nabla\cdot (\vec{d}_0 \cdot\nabla\lambda)=\varepsilon \Delta\lambda,
$$
which corresponds to the viscous Eikonal equation
$$
-|\nabla u|+f +\varepsilon\Delta u=0.
$$
\begin{algorithm}[H]
\caption{Algorithm for inverse Eikonal problem with high-contrast  background}\label{Covsolve}
\begin{algorithmic}[1]
\State For each point source $\m{x}_0^k$ $(k=1,2,...,m)$, %Introduce $\vec{d}_k^1 = \frac{\nabla u^1_k}{|\nabla u^1_k|}$, where $u^1_k$ solves the Eikonal equation with constant slowness.
measure the first arrival time (i.e. solution $u^k$ to the Eikonal equation) on the boundary $\Gamma$ corresponding to the unknown slowness $f$, and denote the measurement as $p^k$.  Given the background slowness function $\overline{f}$ (which may be high contrast profile),  compute the Eikonal solution  $\overline{u}^k$  corresponding to $\overline{f}$ and denote its value on the boundary $\Gamma$ by $\overline{p}^k$. Compute $\vec{d}^k_0 = \frac{\nabla \overline{u}^k}{|\nabla \overline{u}^k|}$.

\State Solve the following equation for each $k$ to deduce $\lambda_k$:
 \begin{equation}
 \begin{aligned}
\epsilon \Delta \lambda^k - \nabla\cdot (\lambda^k \vec{d}^k_\alpha) = 0\quad \text{ in } \Omega,\\
n\cdot\vec{d}^k_\alpha \lambda_k= p^k-\overline{p}^k\quad \text{ on } \Gamma,
 \end{aligned}
 \end{equation}
 where $\vec{d}^k_\alpha$ is deduced by applying Gaussian filter to $\vec{d}^k_0$.
\State Sum $\lambda_k$ over $k$ as the reconstruction of $ f-\overline{f}$.
\end{algorithmic}
\end{algorithm}

\section{Numerical findings and discussions}\label{sec:6}

In this section, we present several numerical findings and carry out a series of implementations to illustrate the robustness and efficiency of the proposed algorithms. In the following examples, the slowness function $f$ is supporte in a square domain $\Omega_2 = [-0.5, 0.5]\times [-0.5, 0.5]$ contained in the circular domain $\Omega = B(0,0.75)$. For each velocity model, the boundary measurements corresponding to several point sources respectively are collected on the boundary $\Gamma$ of the circular domain $\Omega$ for the reconstruction. Both the set of point sources and the set of measured points are equally distributed on $\Gamma$. We shall call these measurements the Eikonal sinogram as an analogue of the sinogram for the Radon transform. The synthetic boundary measurements are computed with the fast switching method with mesh size $h=0.01$. The noisy measurements are generated by adding a stationary additive Gaussian random noise  to the exact boundary measurements:
\begin{equation}
p_s(\theta,x_0) = p_e(\theta, x_0)+\varepsilon \cdot \text{max}_\theta(p_e)\cdot\xi,  
\end{equation}
where $p_e$ denotes the exact data,  $\xi$ follows the standard normal distribution, and $\varepsilon$ denotes the relative noise level.
\subsection{Numerical findings}
We shall illustrate the ill-posedness of the Eikonal inverse tomography numerically in this subsection. In Example \ref{exp:1}-\ref{exp:3}, the measurement corresponding to $18$ sources are collected at 153 points on the boundary.
\begin{example}\label{exp:1}
   Consider the velocity model \eqref {illposedex1} in Section \ref{sec:2}. We compare the measurements corresponding to the velocity models with different magnitudes in this example, that is, we measure the solution of the Eikonal equation on the boundary with different $f_0$ in the velocity model  \eqref {illposedex1}.
   \end{example}
The experiments verify that the measured sinograms with $f_0 = 1.5$ and $f_0 = 2$ are the same. Thus we numerically verify that the Eikonal tomography is severely ill-posed as the same measurements are collected on the boundary for certain scenarios regardless of the value of contrast.
\begin{figure}[ht!]
\centering
\subfigure[Slowness function $f$ with $f_0 = 1.1$]{
\label{d01}
\includegraphics[width=0.31\linewidth]{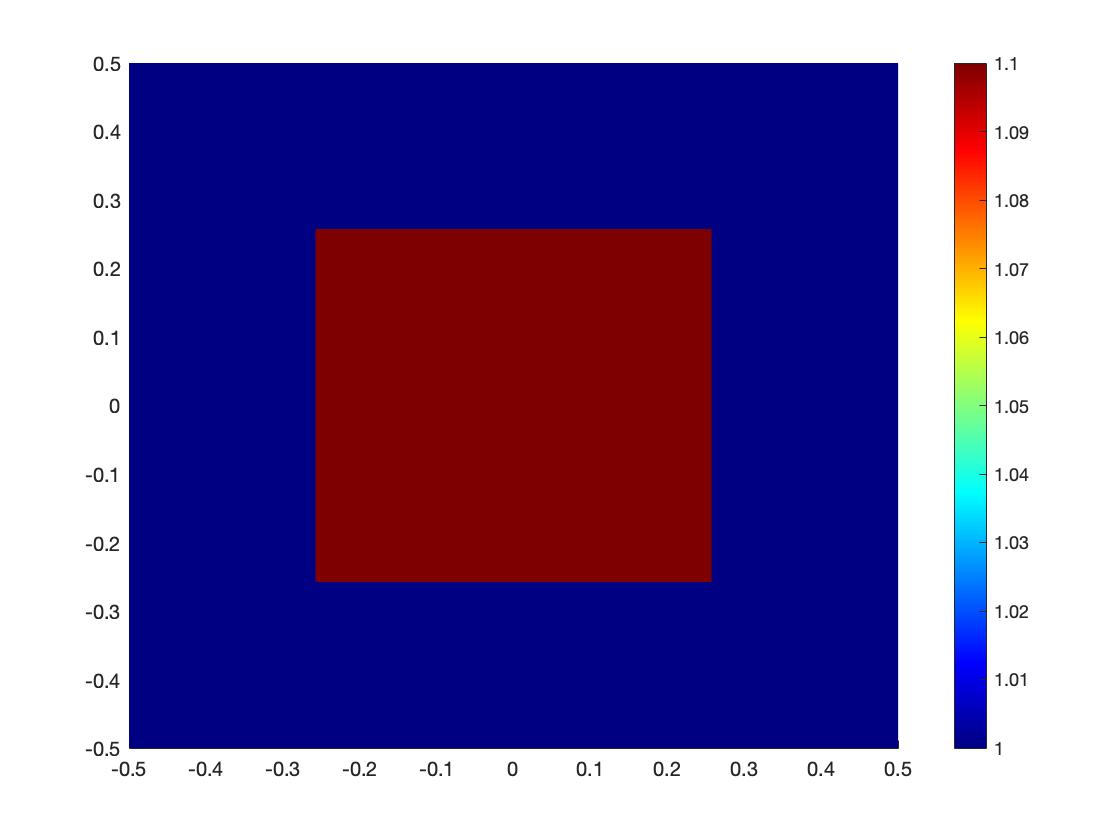}}
\subfigure[Eikonal sinogram]{
\label{d02}
\includegraphics[width=0.31\linewidth]{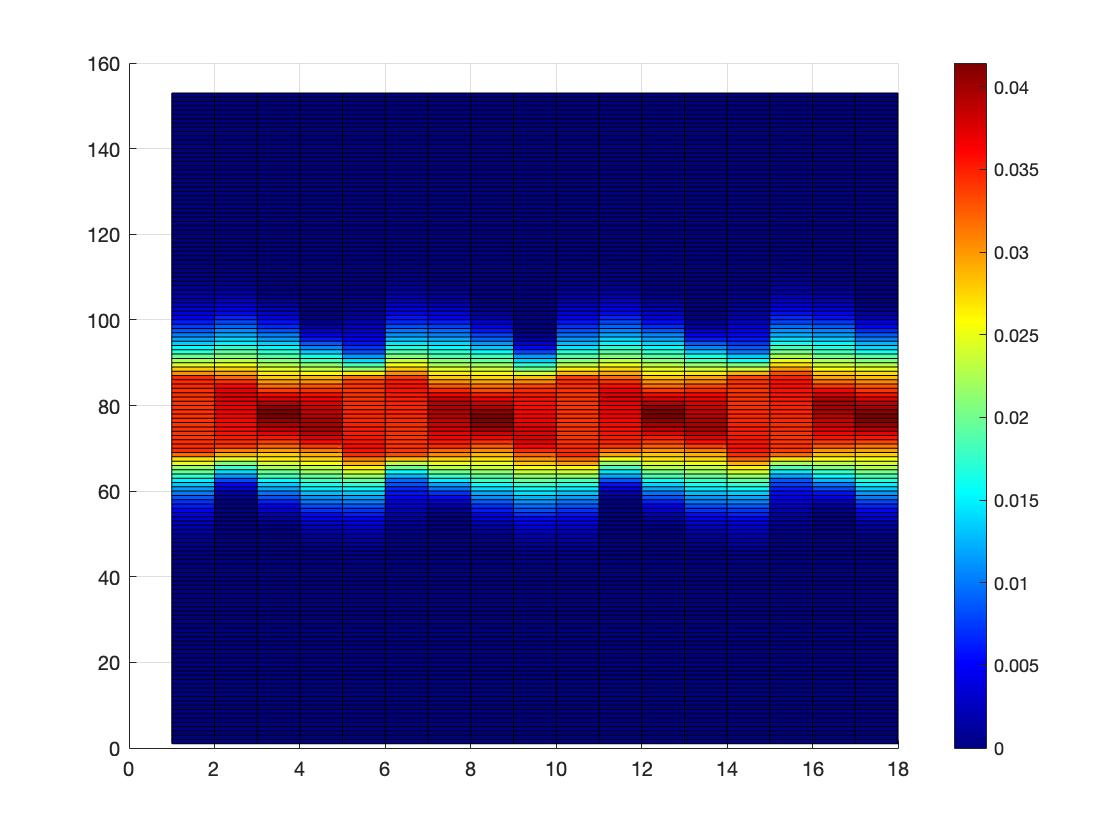}}
\subfigure[FBP reconstruction]{
\label{d03}
\includegraphics[width=0.31\linewidth]{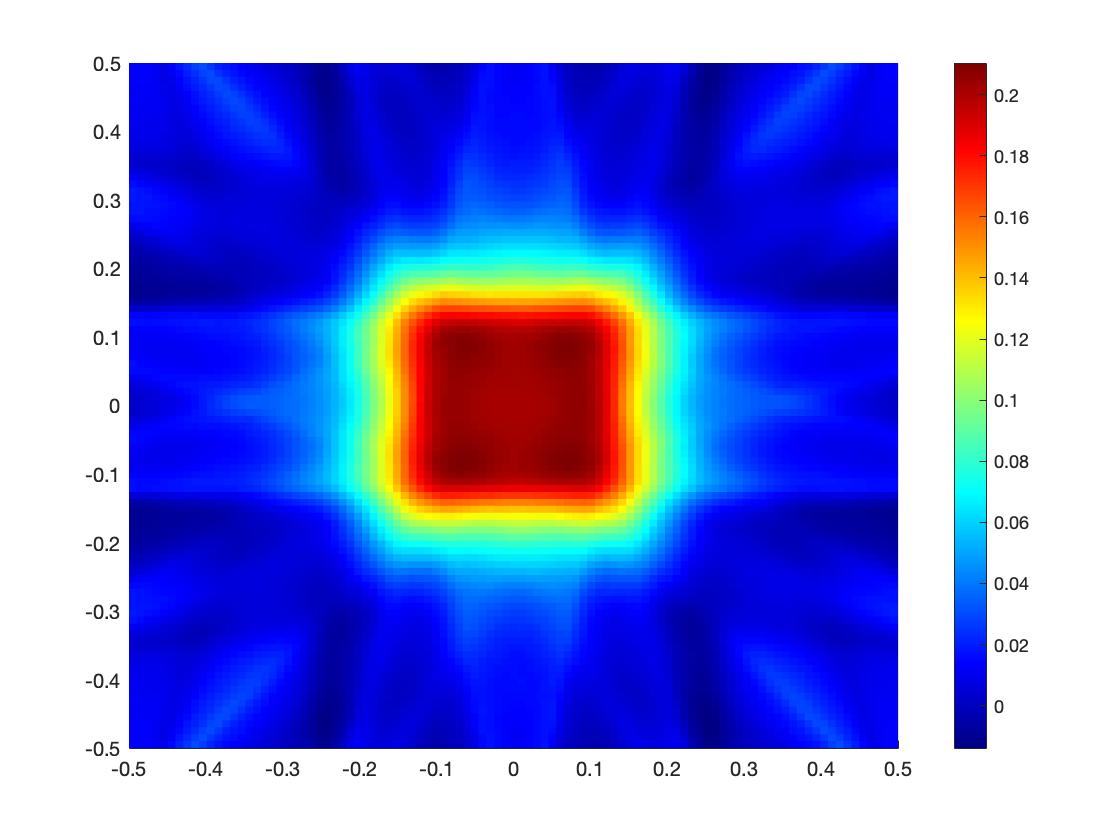}}
\subfigure[Slowness function $f$ with $f_0 = 1.5$]{
\label{d4}
\includegraphics[width=0.31\linewidth]{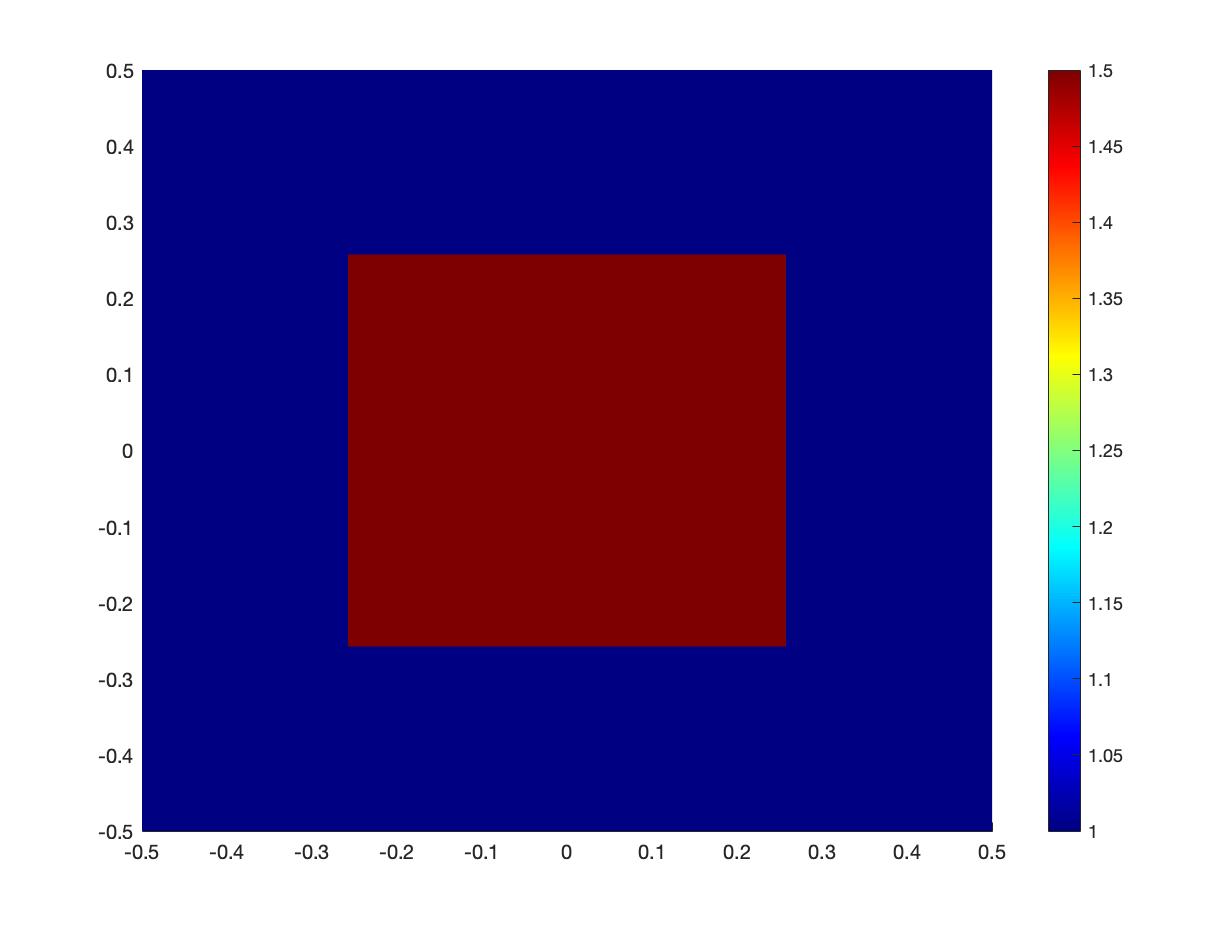}}
\subfigure[Eikonal sinogram]{
\label{d5}
\includegraphics[width=0.31\linewidth]{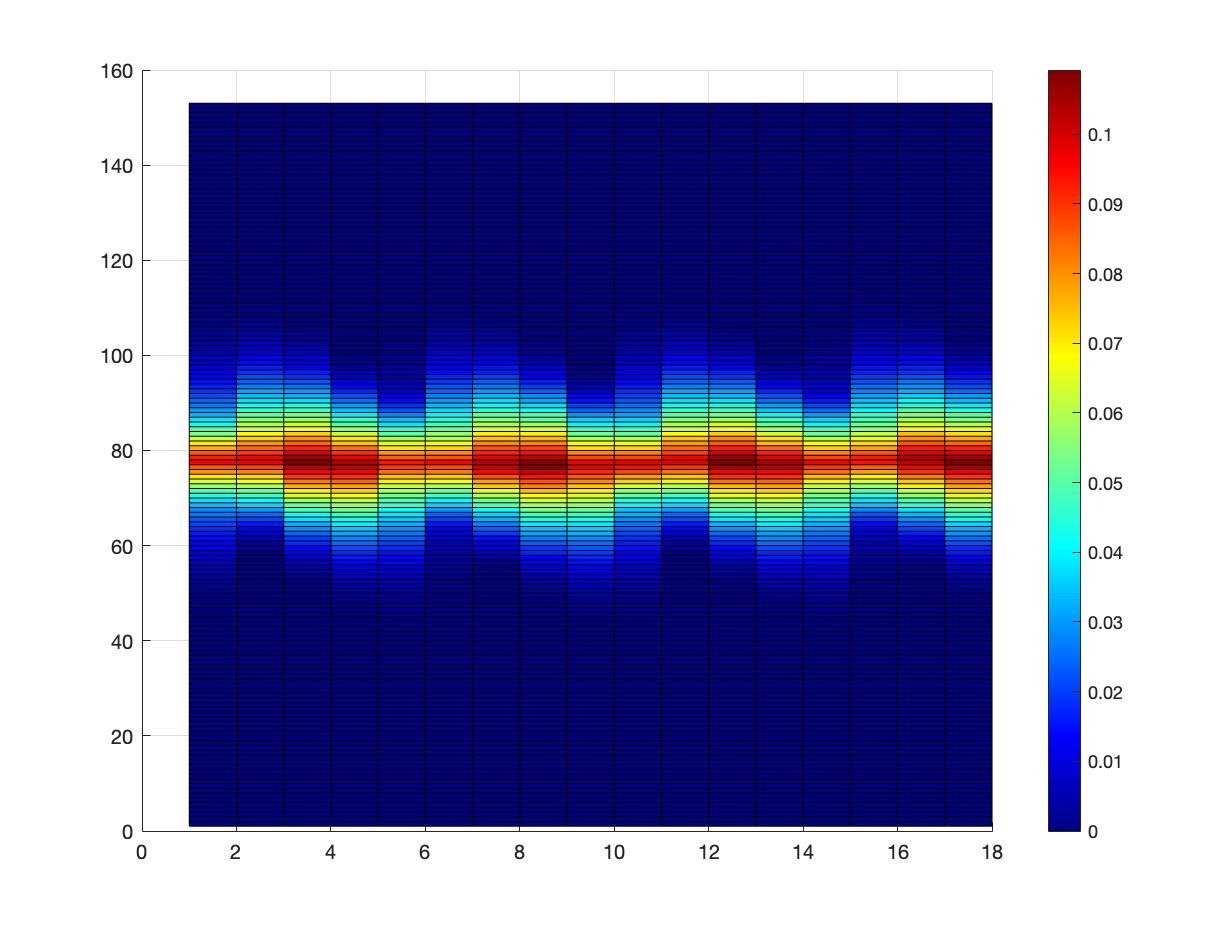}}
\subfigure[FBP reconstruction]{
\label{d6}
\includegraphics[width=0.31\linewidth]{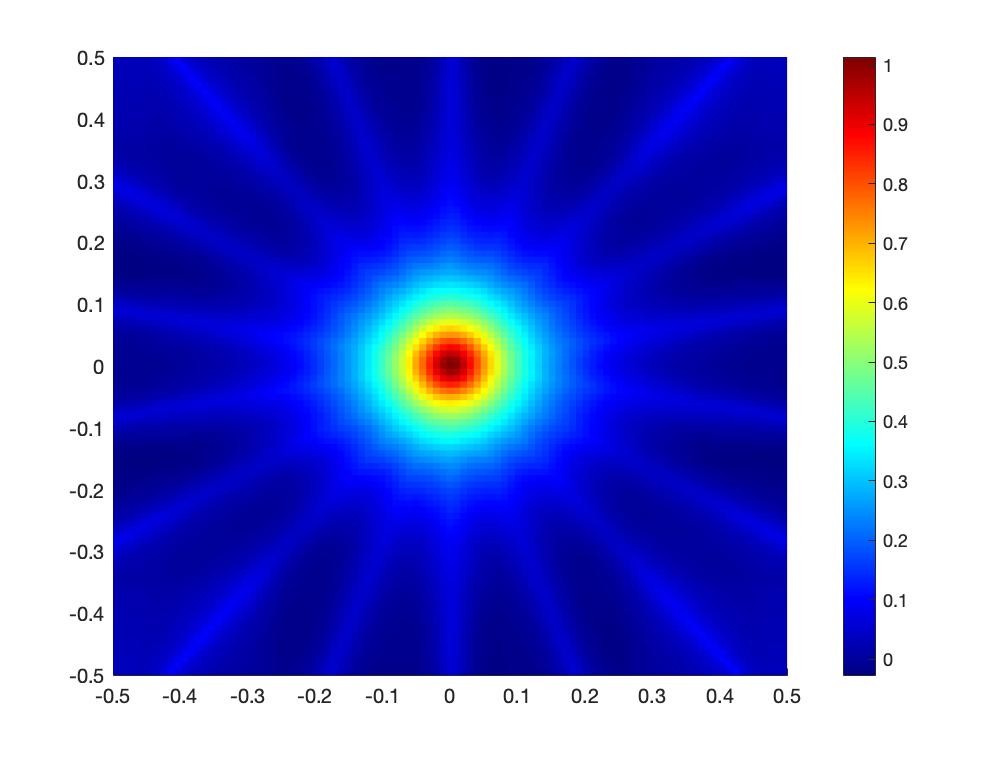}}
\subfigure[Slowness function $f$ with $f_0 = 2$]{
\label{d4}
\includegraphics[width=0.31\linewidth]{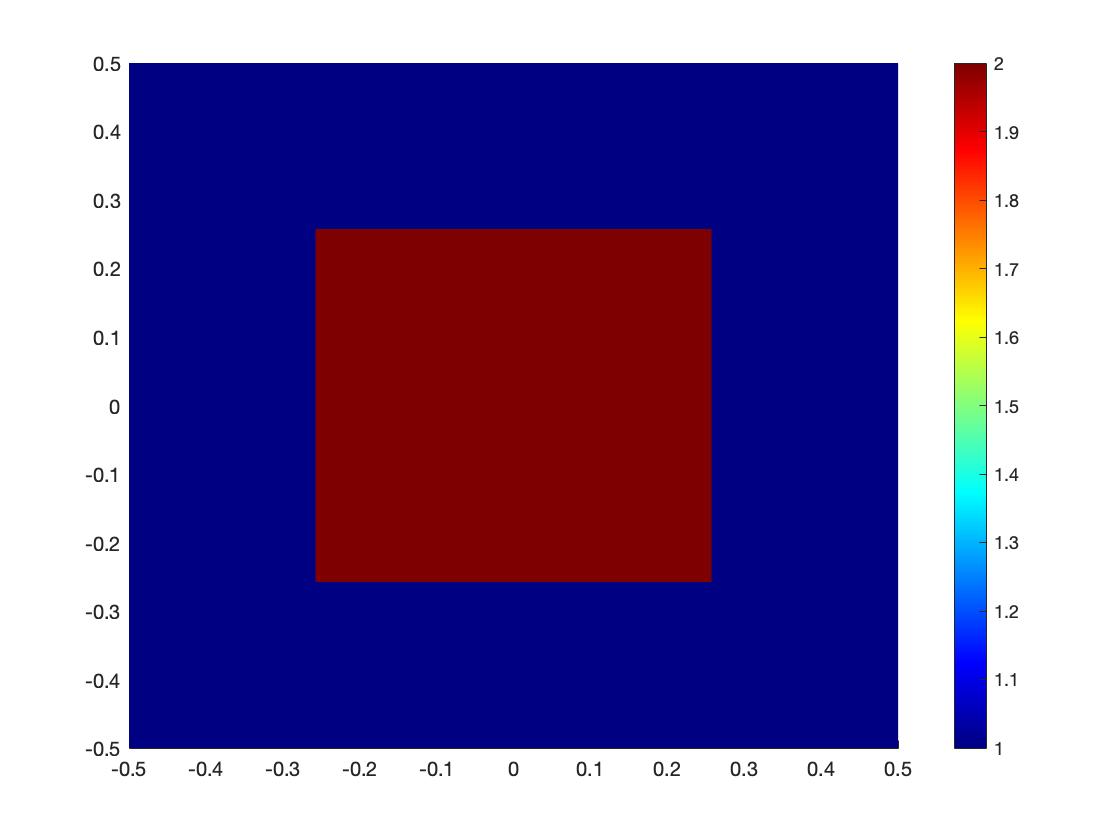}}
\subfigure[Eikonal sinogram]{
\label{d5}
\includegraphics[width=0.31\linewidth]{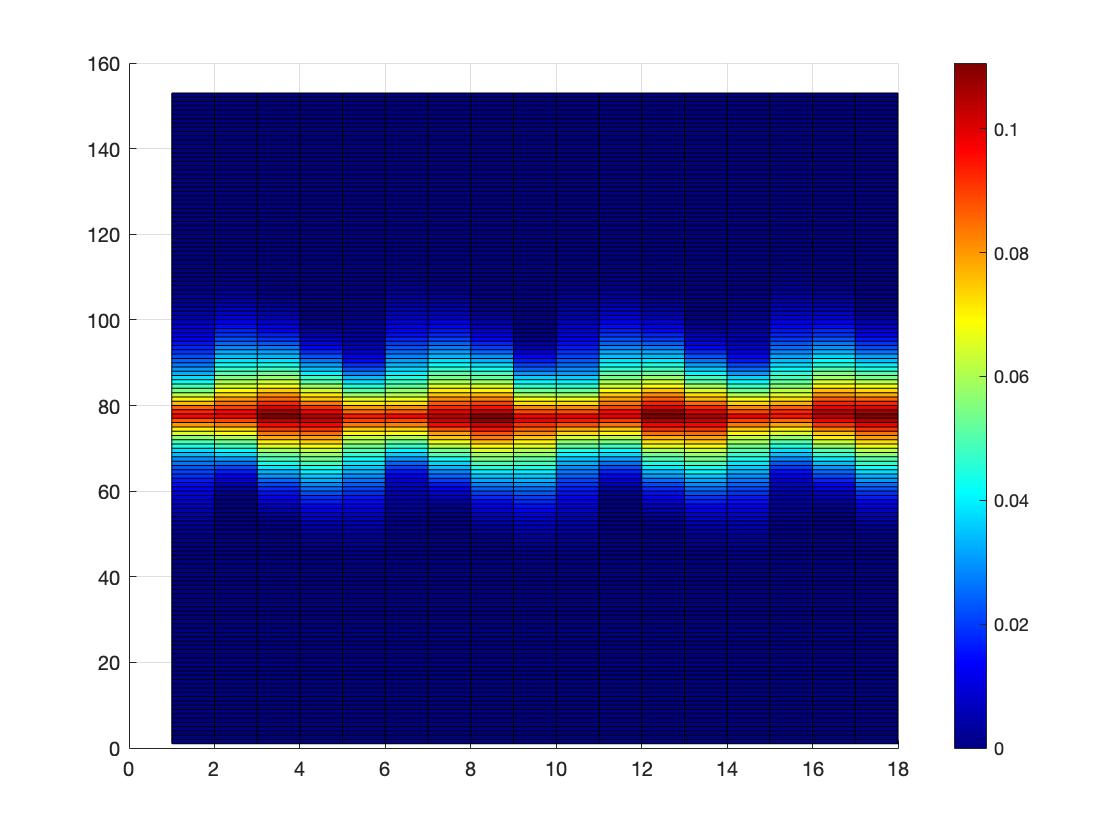}}
\subfigure[FBP reconstruction]{
\label{d6}
\includegraphics[width=0.31\linewidth]{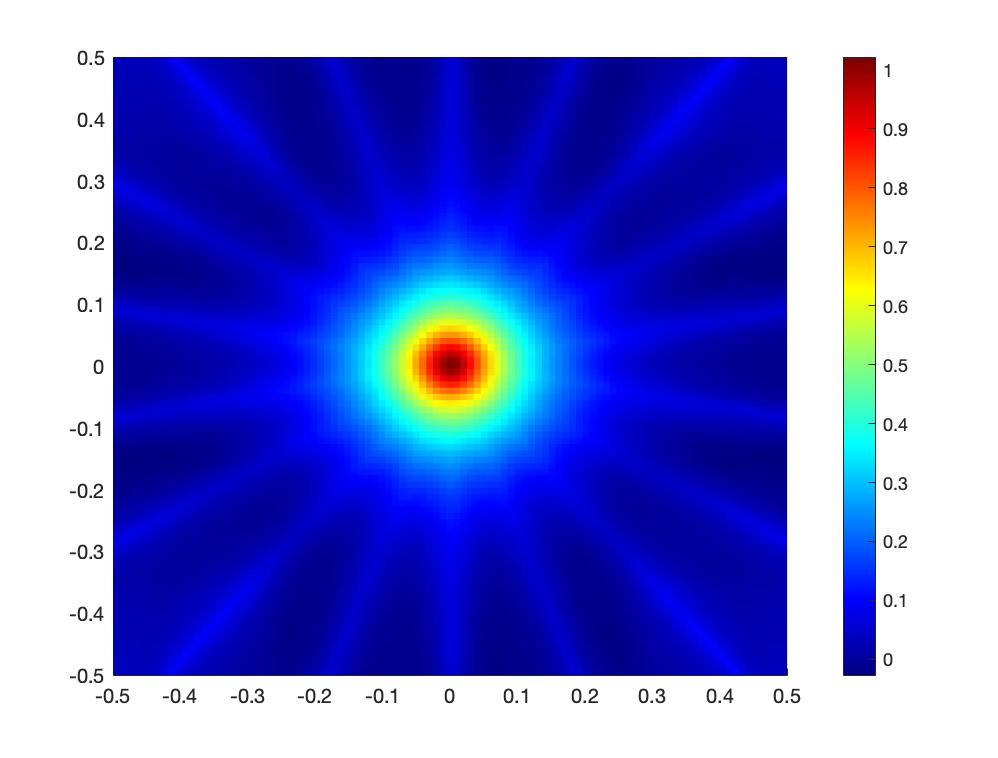}}
\caption{Example \ref{exp:1}. Comparison of the sinograms and reconstructions corresponding to the velocity models with different magnitudes. }
\end{figure}

\begin{example}\label{exp:2}
In this example, we present the fanbeam sinogram and the FBP reconstruction for the velocity model with two inclusions of size $0.2\times 0.2$ with different contrast, located respectively at positions $(-0.20,-0.20)$ and $(0.20,-0.10)$. 
\end{example}

Although the exact slowness inside the inclusion centered at $(0.20,-0.10)$ is larger than the other one, the difference is not correctly reflected in the reconstruction. This implies that only the profile of the velocity can be reconstructed for the high-contrast medium due to the ill-posed nature of the problem. 

\begin{figure}[ht!]
\centering
\subfigure[Slowness function $f$]{
\label{qq01}
\includegraphics[width=0.31\linewidth]{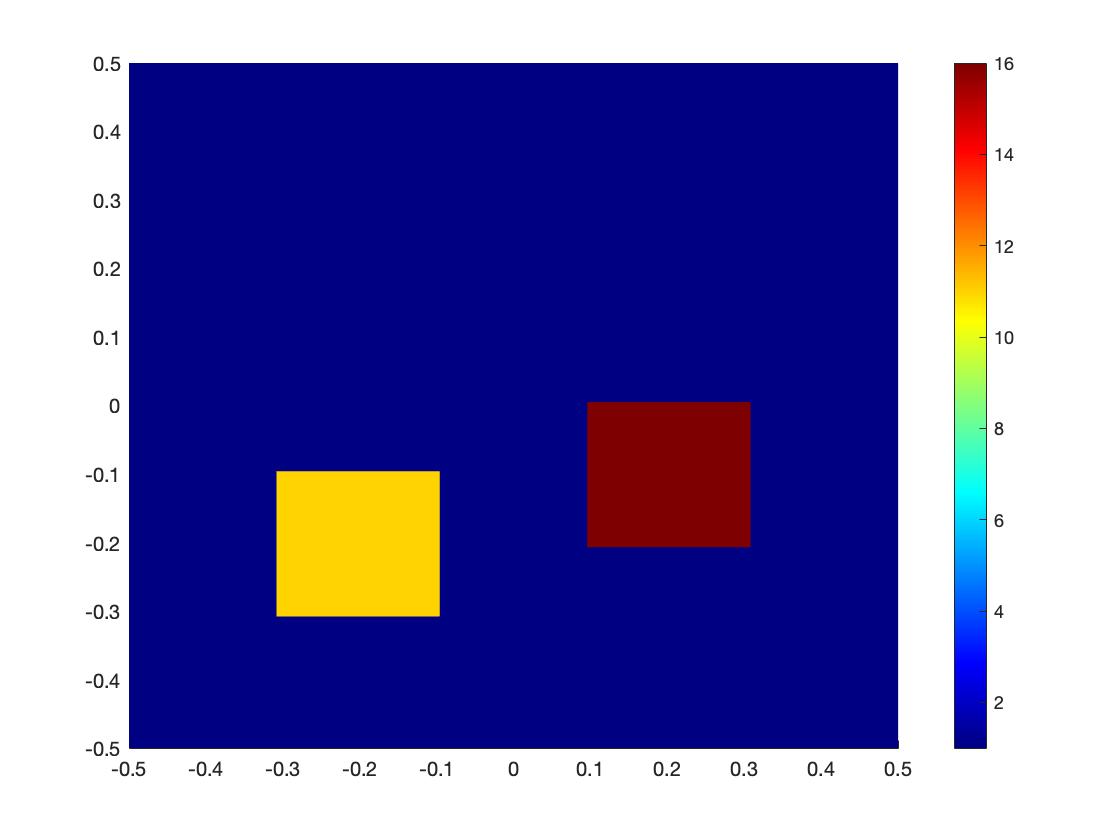}}
\subfigure[Eikonal sinogram]{
\label{qq02}
\includegraphics[width=0.31\linewidth]{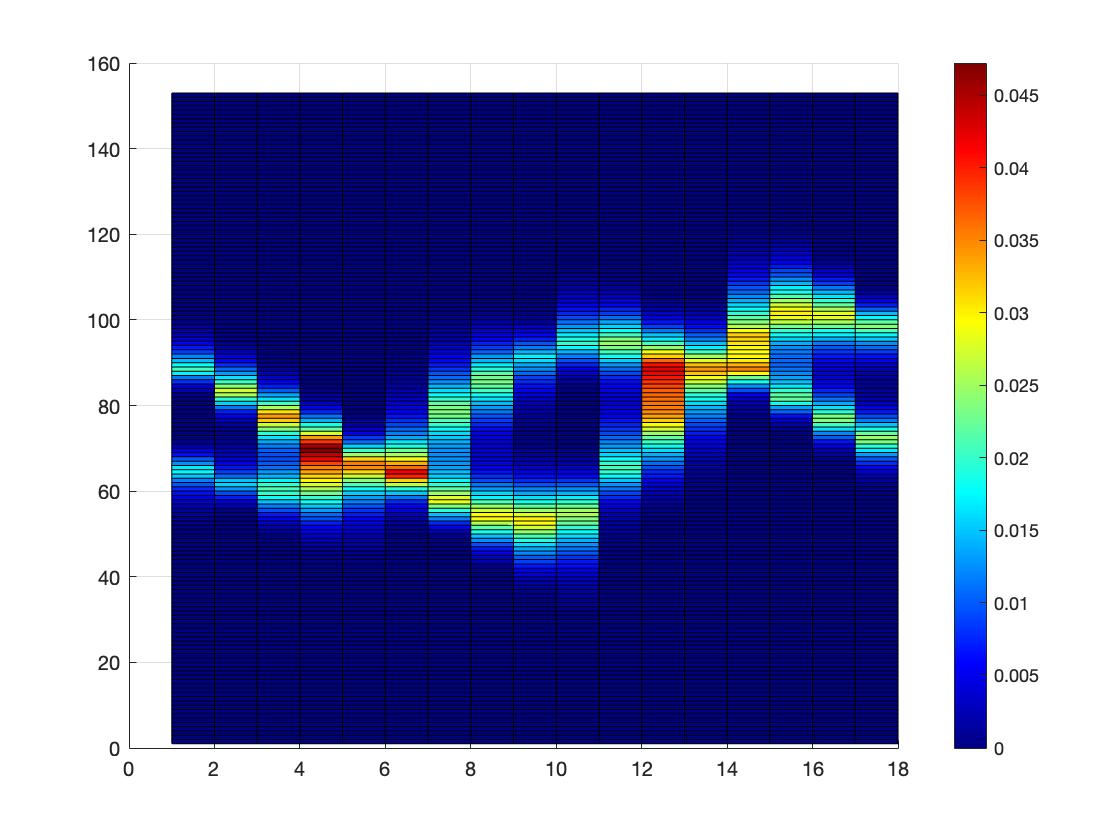}}
\subfigure[FBP reconstruction]{
\label{qq03}
\includegraphics[width=0.31\linewidth]{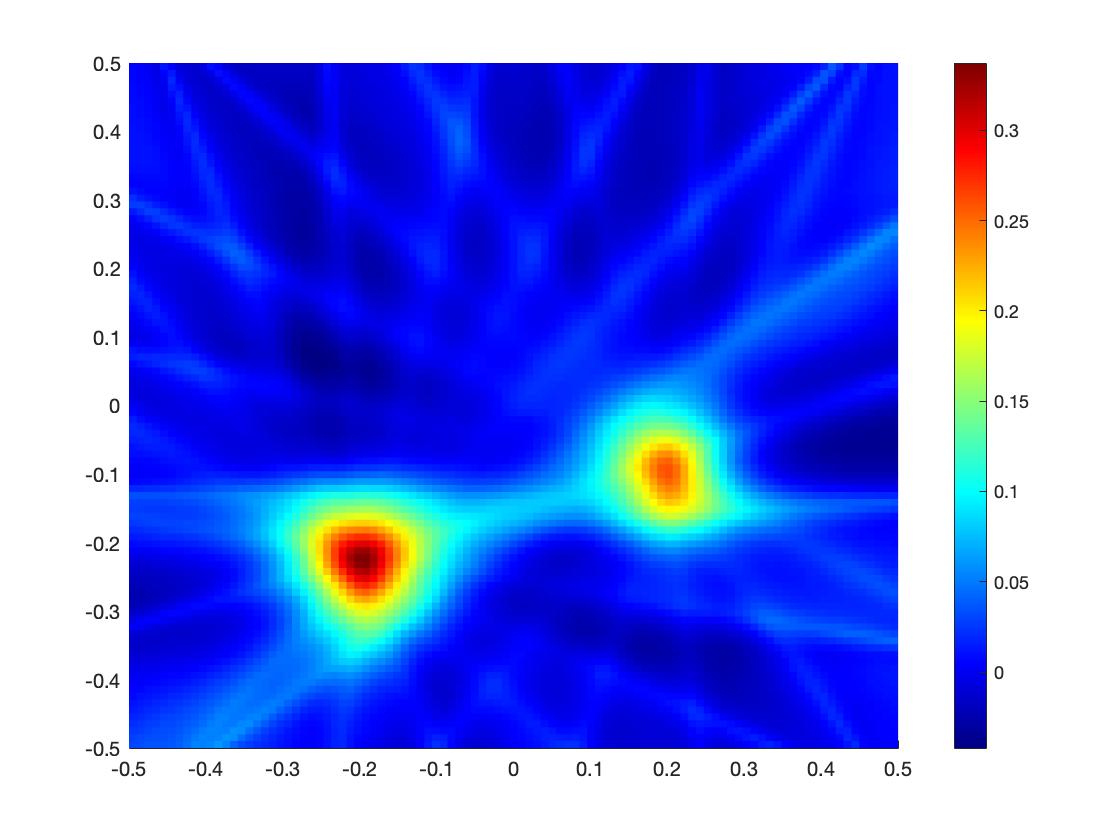}}
\caption{Example \ref{exp:2}. Reconstruction of velocity model with two inclusions of different contrast. }
\end{figure}

\begin{example}\label{exp:3}
Now we present the fanbeam sinogram and the Eikonal sinogram for the same velocity model with four small inclusions of size $0.1\times 0.1$, centered at  $(-0.25,-0.25)$, $(0.30,-0.35)$, $(0.25, 0.35) $, and $(-0.20,0.20)$  as shown in \ref{d4}. The slowness inside the inclusions is $1.5$ and the background slowness is $1$.
\end{example}
 In previous sections we consider the inverse fanbeam transform as an efficient approximation of Eikonal tomography when the slowness distribution is close to the homogeneous background. The fanbeam sinogram can be considered as the line integrals of wave amplitude attenuation for straight ray-paths connecting the point sources and the measurement surface, while for the velocity model with inhomogeneity,  the ray-paths can depend strongly on the unknown wave speeds, and thus the resulting Eikonal sinogram displays similar but different patterns to fanbeam sinogram, as shown in \ref{d5} and \ref{d6}. Therefore it is necessary to filter and refine the back projection methods as in our proposed algorithm. This velocity model will be revisited in example 4 to examine the performance of the reconstruction algorithm.

\begin{figure}[ht!]
\centering
\subfigure[Slowness function $f$]{
\label{d4}
\includegraphics[width=0.32\linewidth]{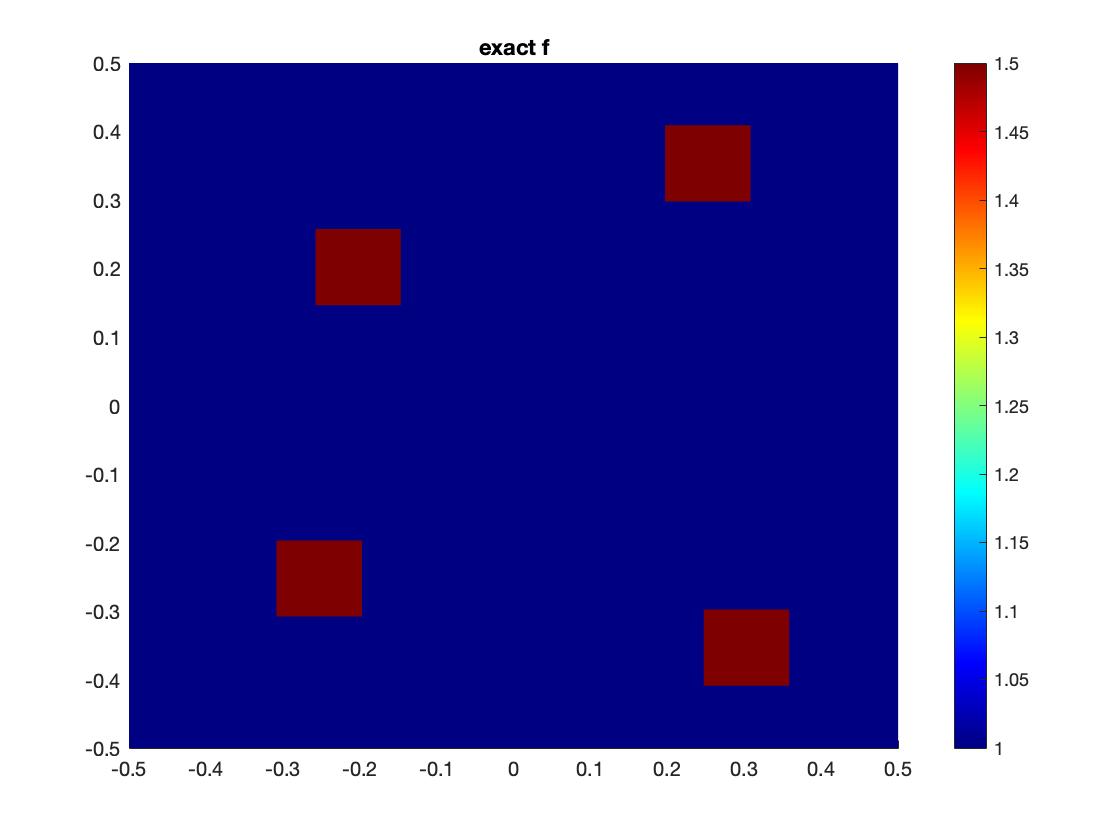}}
\subfigure[Fanbeam sinogram]{
\label{d5}
\includegraphics[width=0.32\linewidth]{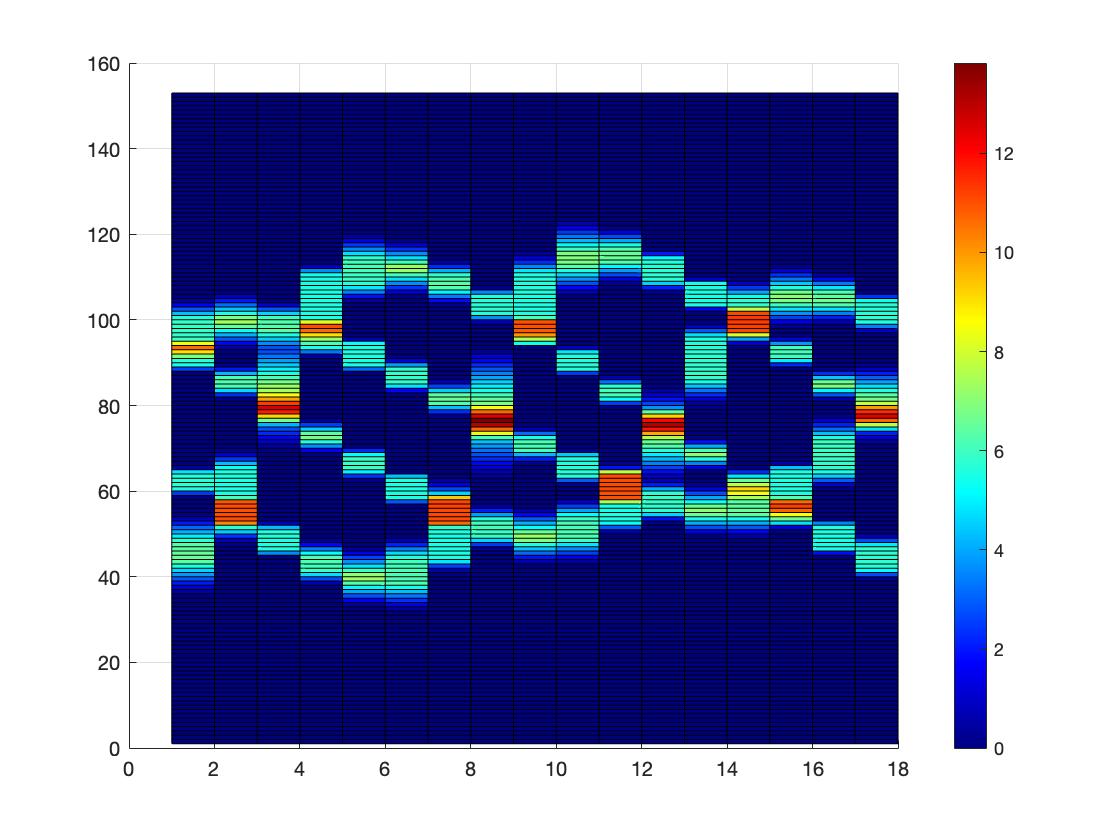}}
\subfigure[Eikonal sinogram]{
\label{d6}
\includegraphics[width=0.32\linewidth]{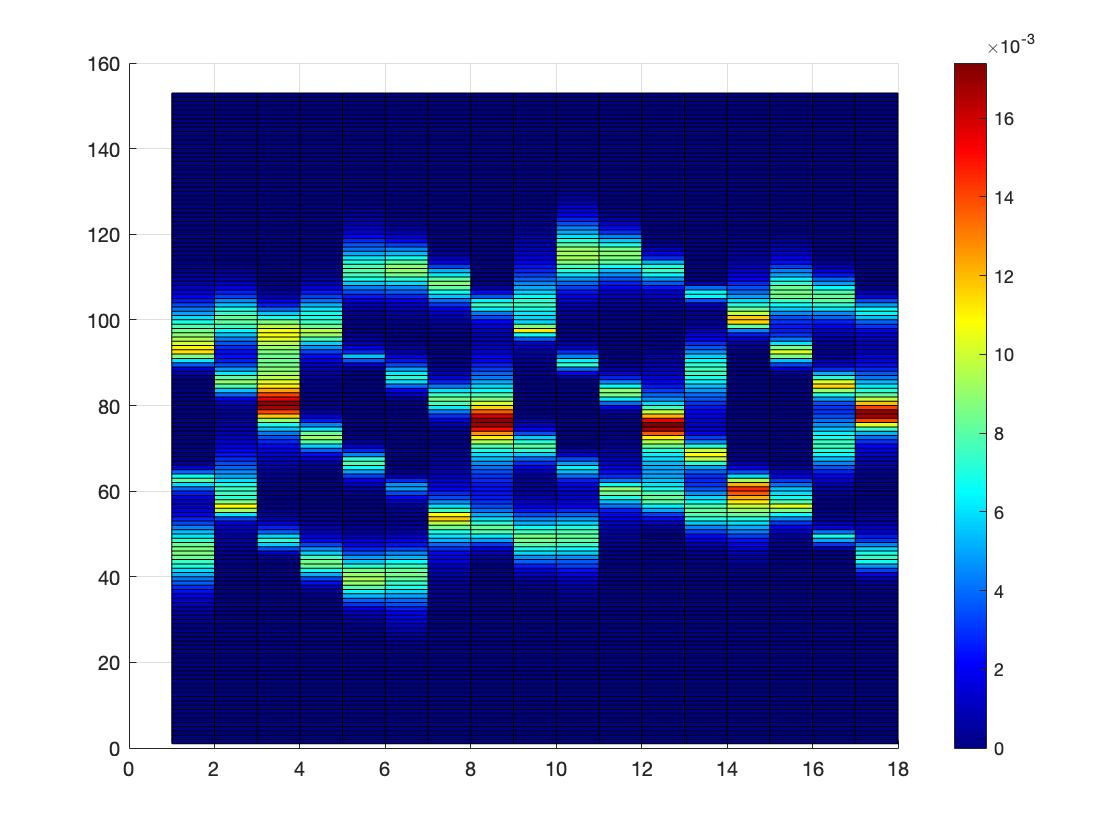}}
\caption{Example \ref{exp:3}. Comparison of the fanbeam sinogram and the Eikonal measurement. }
\end{figure}

\subsection{Two-step FBP method}
Now we implement the algorithm \ref{fb0} on two velocity models to demonstrate the efficiency and
accuracy of the proposed two-step direct probing method.  The measurement corresponding to $18$ sources is collected at 153 points on the boundary for the reconstruction in Example \ref{exp:4}, and the measurement corresponding to $36$ sources collected at 153 points is required for the reconstruction in Example \ref{exp:5}.

%\textbf{Example 4.}
%In this example, we examine the performance of our algorithm on the continuous velocity $f$ as follows:
% $$f = 1+   2.5\times e^{\frac{-100}{max(1/4 - (x+0.2)^2- (y+0.2)^2, 0.1)}}+2.5\times e^{\frac{-100}{max(1/4 - (x-0.2)^2- (y-0.2)^2, 0.1)}}.$$
%%The numerical results for Example is are shown in Fig.\ref{ex4}. 
%
%We observe that the FBP reconstruction \ref{d5} captures that profile of the continuous velocity quite well. The refined algorithm does not significantly improve the reconstruction for such continuous case.
%The presence of $10\%$ noise in the measured data deteriorates the reconstruction slightly but does not affect the overall shape, which shows that this algorithm is robust with respect to the measurement. 
%
%\begin{figure}[ht!] 
%\centering
%\subfigure[True slowness]{
%\label{d4}
%\includegraphics[width=0.31\linewidth]{}}
%\subfigure[FBP with exact measurement]{
%\label{d5}
%\includegraphics[width=0.31\linewidth]{}}
%\subfigure[Reconstruction after refinement]{
%\label{d6}
%\includegraphics[width=0.31\linewidth]{}}
%\subfigure[True slowness]{
%\label{d44}
%\includegraphics[width=0.31\linewidth]{Figures/continuous_exact.jpg}}
%\subfigure[FBP with noisy measurement]{
%\label{d55}
%\includegraphics[width=0.31\linewidth]{}}
%\subfigure[Reconstruction after refinement with noisy measurement]{
%\label{d66}
%\includegraphics[width=0.31\linewidth]{}}
%\caption{Example 4} \label{ex4}
%\end{figure}
%

\begin{example}\label{exp:4}
We consider the velocity model in Example \ref{exp:3}, which contains four inclusions in the homogeneous background with the slowness $f = 1.5$ inside the inclusions and $f = 1$ in the background.
\end{example}

It is observed that although the Eikonal sinogram and the fanbeam sinogram displays different patterns,  the reconstruction with the FBP method from the Eikonal sinogram can provide a quite accurate indicator of the locations of these four inclusions.  The overall profile stands out clearly and agrees well with the exact velocity model. The refinement step does not significantly improve the reconstruction when the size of inclusions is relatively small. With the presence of $5\%$ noise in measurement, the FBP method still leads to satisfying reconstruction.  Hence the proposed algorithm is tolerant with respect to data noise.

\begin{figure}[ht!]
\centering
\subfigure[Slowness function $f$]{
\label{d1}
\includegraphics[width=0.31\linewidth]{Figures/far_exact.jpg}}
\subfigure[FBP with exact measurement]{
\label{d2}
\includegraphics[width=0.31\linewidth]{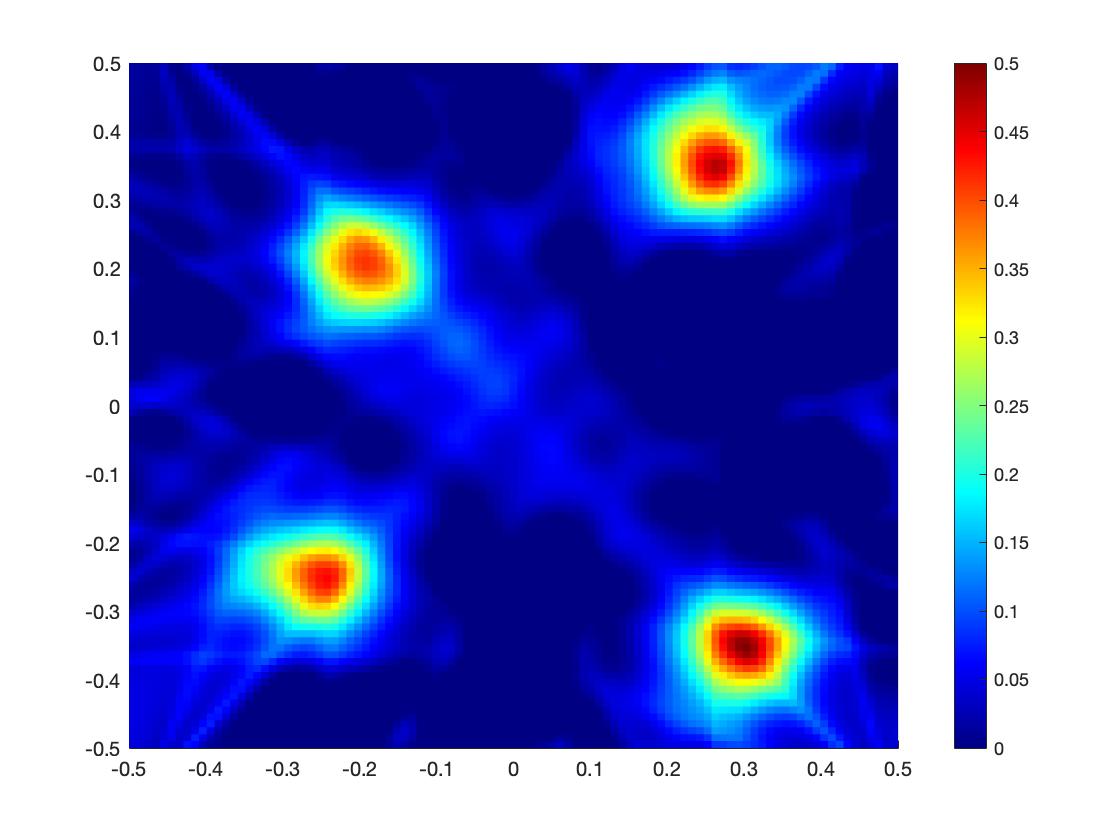}}
\subfigure[Reconstruction after refinement]{
\label{d3}
\includegraphics[width=0.31\linewidth]{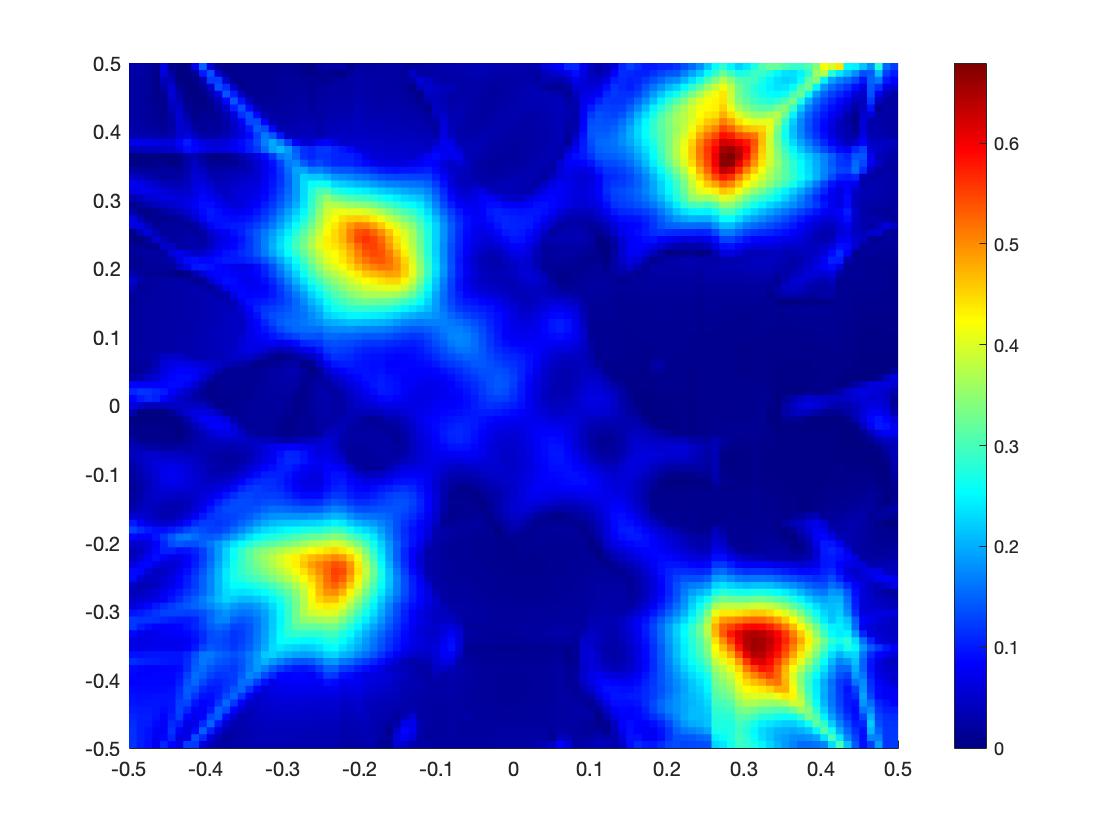}}
\subfigure[Slowness function $f$]{
\label{d4}
\includegraphics[width=0.31\linewidth]{Figures/far_exact.jpg}}
\subfigure[FBP with noisy measurement]{
\label{d5}
\includegraphics[width=0.31\linewidth]{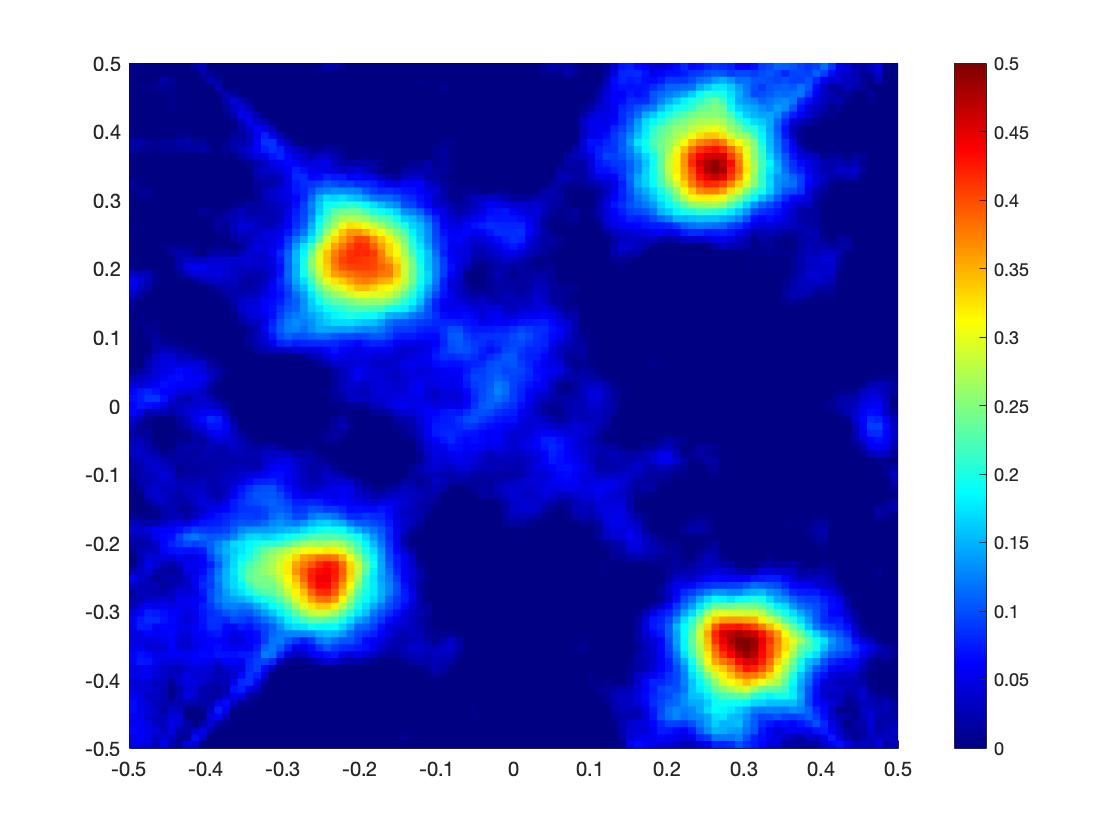}}
\subfigure[Reconstruction after refinement with noisy measurement]{
\label{d6}
\includegraphics[width=0.31\linewidth]{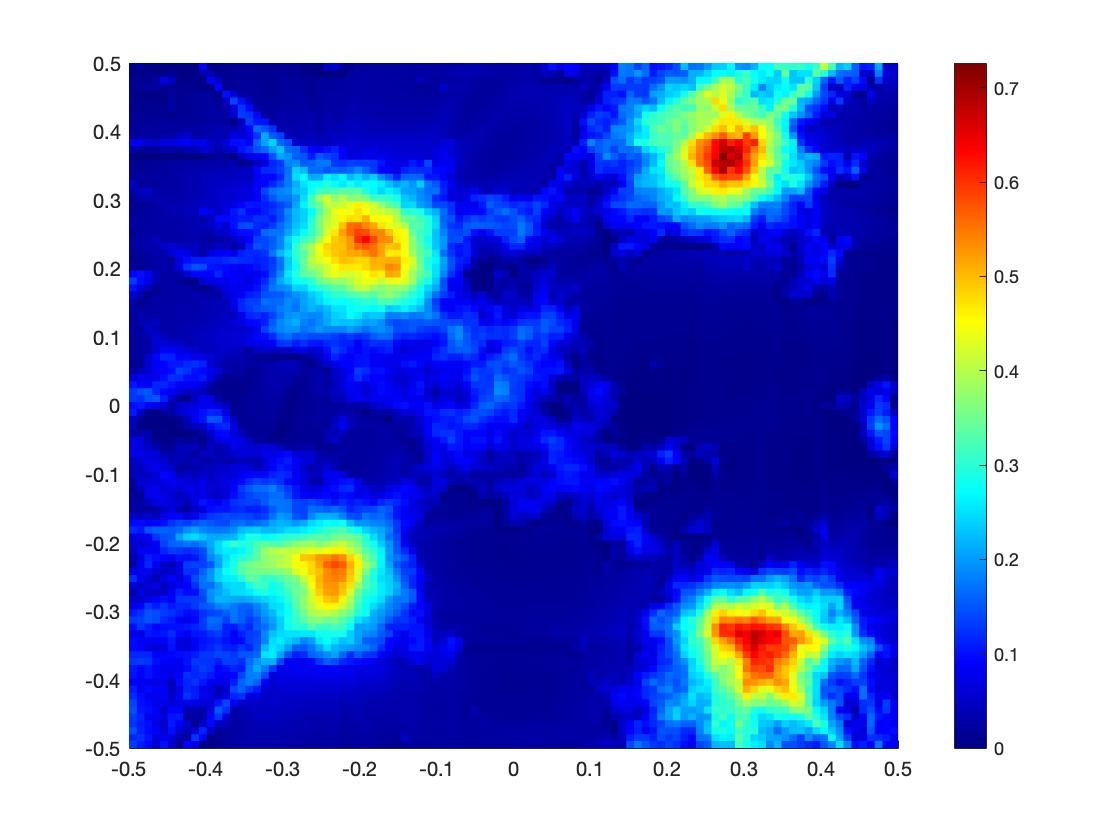}}
\caption{Example \ref{exp:4}. Reconstructions of discontinuous slowness function with exact and noisy measurement with noise level $5\%$ }
\end{figure}

\begin{example}\label{exp:5}
In this example, the velocity model with a ring-shaped square inclusion is examined as shown in Fig. \ref{e1}. The outer and inner side lengths of the ring-shaped inclusion are $0.6$ and $0.5$, and the inclusion is centered at $(0,0)$. The slowness is taken to be $f = 1.05$ inside the region and $f=1$ as the background. The reconstruction with the exact data and noisy data with $5\%$ noise are presented.
\end{example} 

 Such ring-shaped inclusions are relatively challenging to recover, yet the overall profile stands clearly in the reconstruction \ref{e2} of the FBP method.  It can be observed that the refinement step enhances the reconstruction and exhibits a clear ring structure which agrees excellently with the exact velocity model.  Our probing method remains stable with respect to noise in the data.

\begin{figure}[ht!]
\centering
\subfigure[Slowness function $f$]{
\label{e1}
\includegraphics[width=0.31\linewidth]{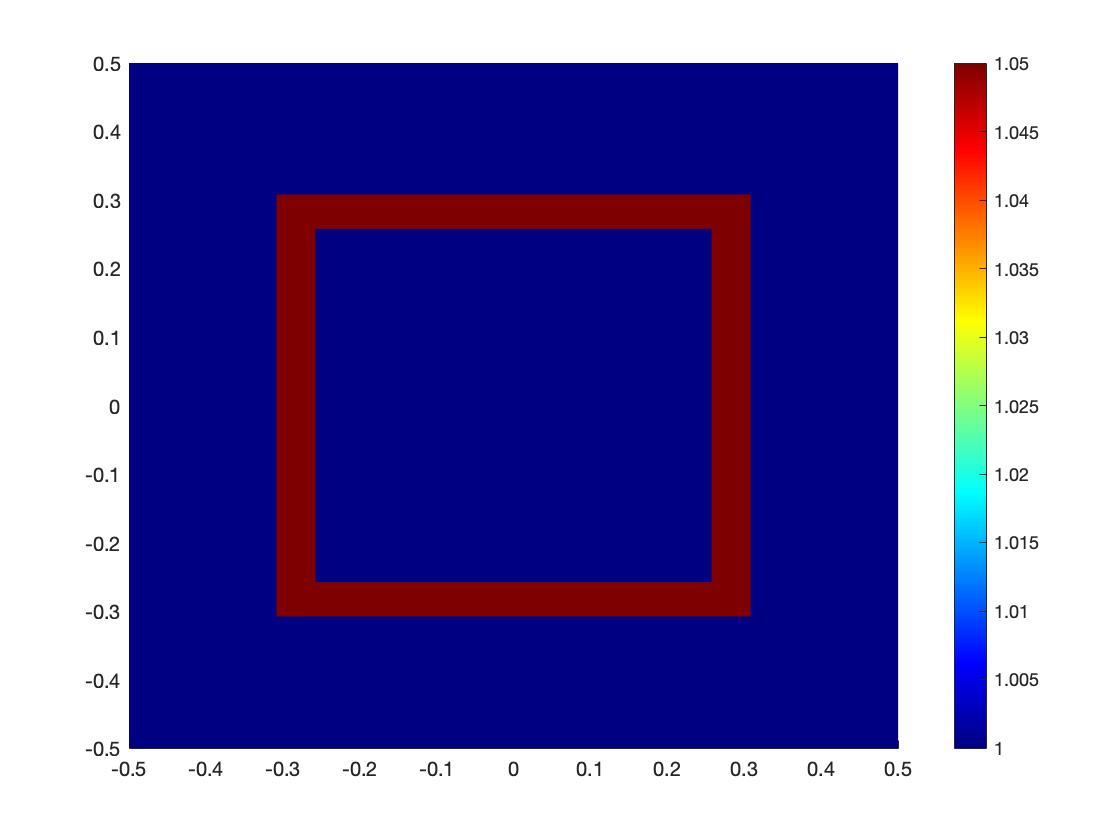}}
\subfigure[FBP with exact measurement]{
\label{e2}
\includegraphics[width=0.31\linewidth]{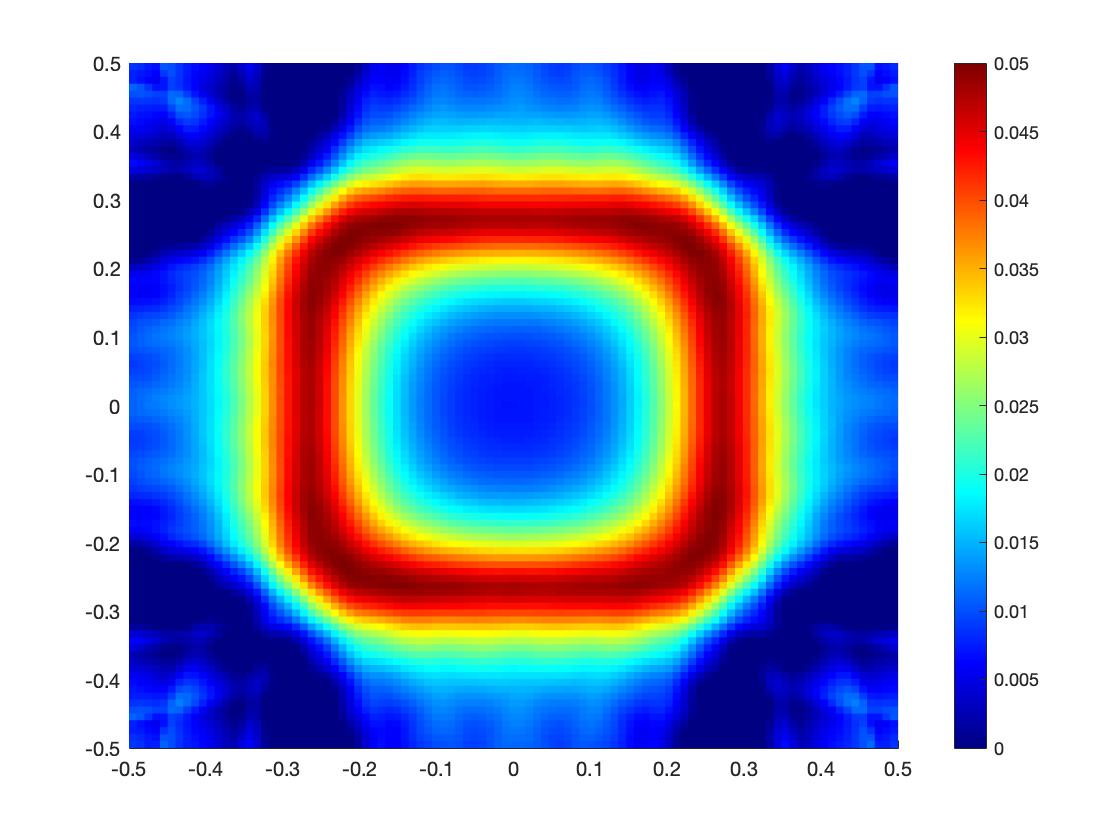}}
\subfigure[Reconstruction after refinement]{
\label{e3}
\includegraphics[width=0.31\linewidth]{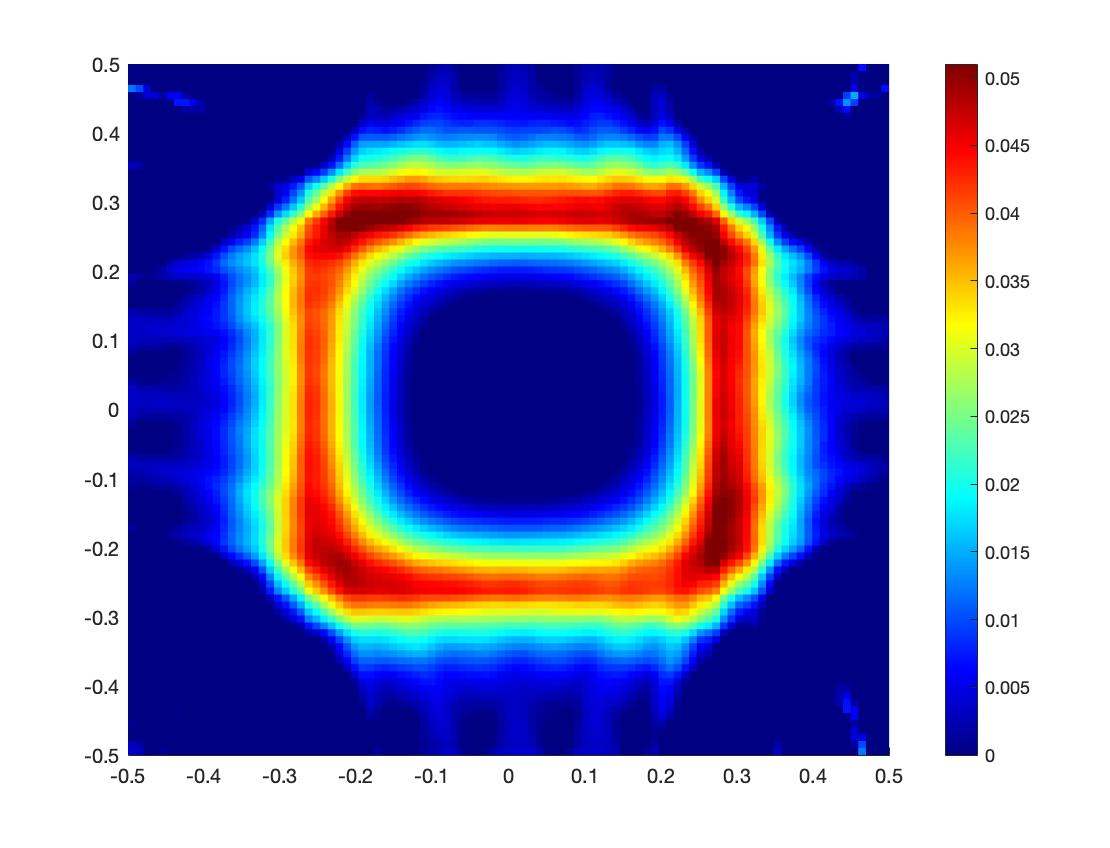}}
\subfigure[Slowness function $f$]{
\label{e4}
\includegraphics[width=0.31\linewidth]{Figures/recring_exact.jpg}}
\subfigure[FBP with noisy measurement]{
\label{e5}
\includegraphics[width=0.31\linewidth]{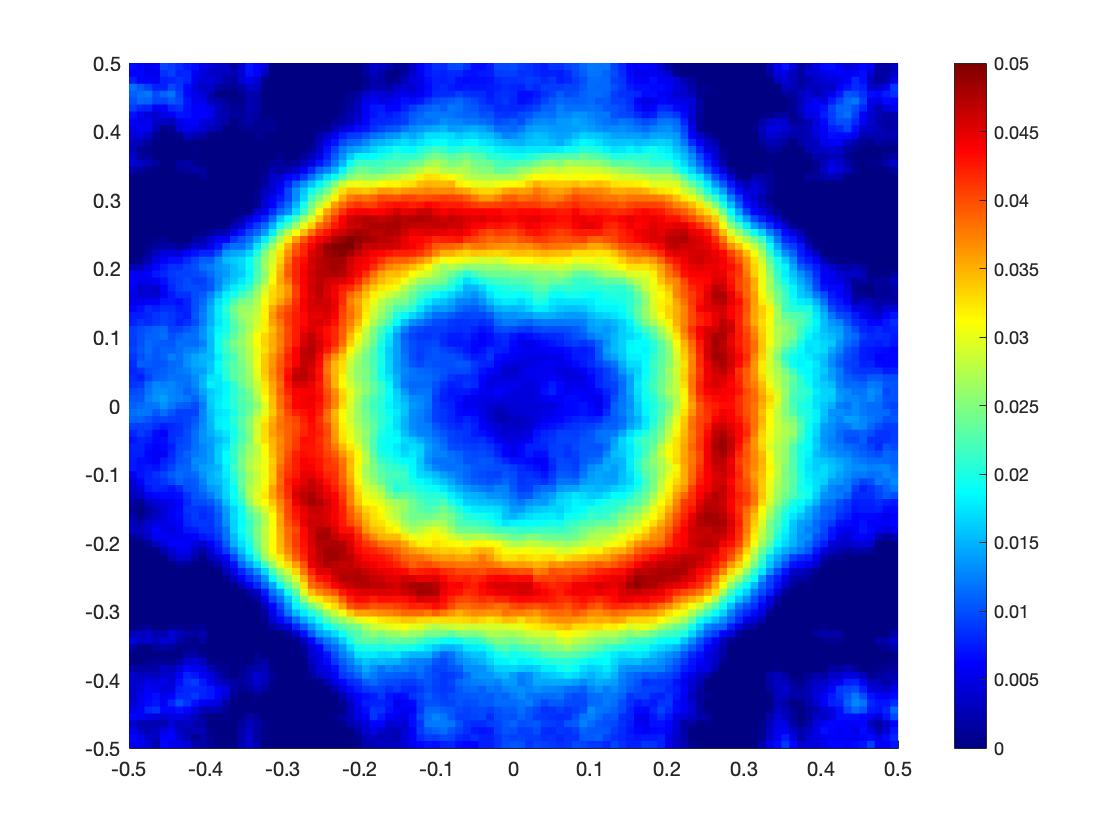}}
\subfigure[Reconstruction after refinement]{
\label{e6}
\includegraphics[width=0.31\linewidth]{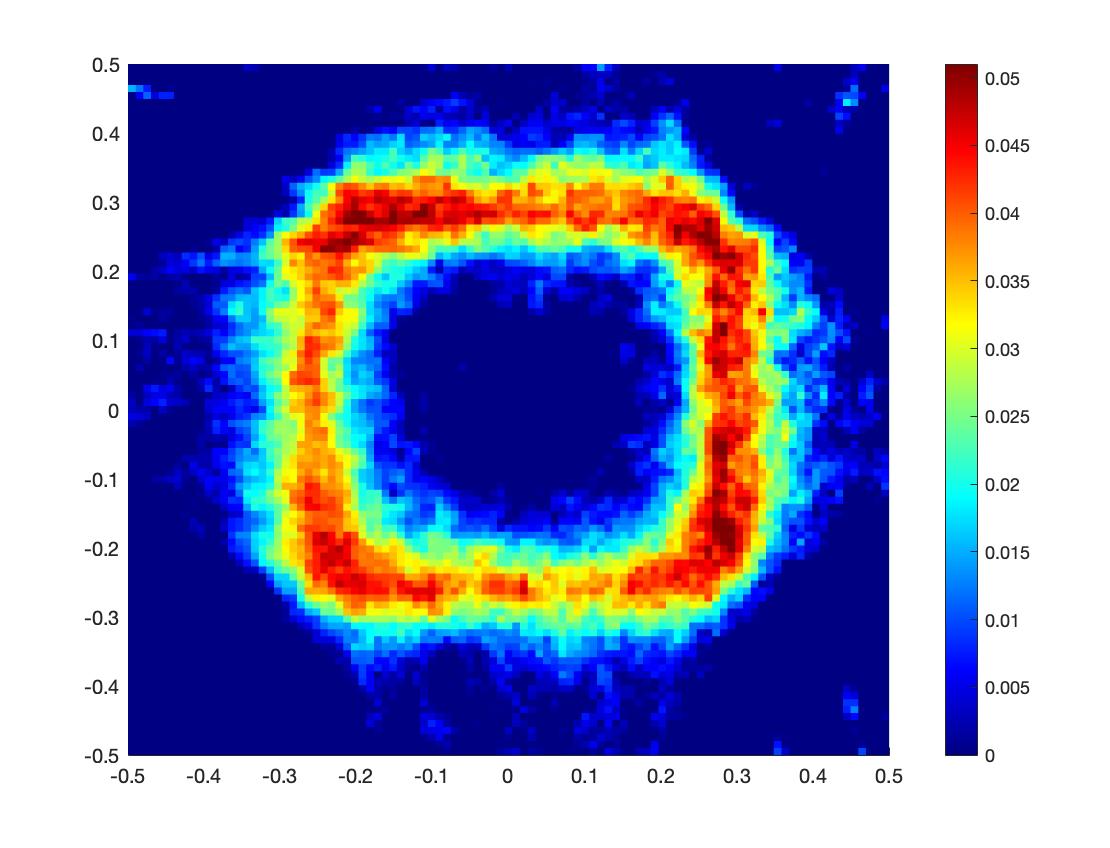}}

\caption{Example \ref{exp:5}.  Reconstructions of the slowness function with rectangular ring-shaped inhomogeneity  using exact and noisy measurement with noise level $1\%$ }
\end{figure}

\subsection{ Inhomogeneous velocity field of high contrast with  assumed background}
Now we examine the filtered back projection method \ref{Covsolve} with assumed background on two velocity models. For the reconstruction,  the measurement corresponding to 18 sources are collected at 153 points on the boundary with  $1\%$ noise.
\begin{example}\label{exp:6}
Consider the velocity model with a large rectangular nonhomogeneous regions in the assumed background. The assumed $\overline{f}$ satisfies  $\overline{f} =1.1$  inside the large obstacle of size $0.65\times 0.45$ centered at $(-0.125,-0.025)$ and $\overline{f}=1$ in the background as shown in \ref{d5}.  We are interested in resolving the two small rectangular shaped obstacles of size $0.1\times 0.1$ located at $(0.20,0)$ and $(-0.25,-0.25)$ as shown in Fig. \ref{d4}.  The slowness function inside these two small obstacles is taken to be   $1.05$ and $1.15$ respectively.
\end{example}

As one of the inclusions is within the large obstacle in the assumed background while the other one is outside of the obstacle, it is relatively hard to detect their locations precisely without the information of the assumed background due to the ill-posed nature of the inverse Eikonal tomography. From Fig. \ref{d6}, we can see that both small inclusions are well separated, and their locations are recovered pretty satisfactorily with the presence of $10\%$ noise in the measurement.

\begin{figure}[ht!]
\centering
\subfigure[Slowness function $f$]{
\label{d4}
\includegraphics[width=0.32\linewidth]{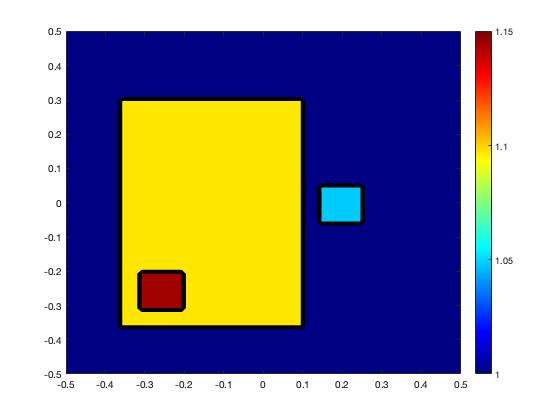}}
\subfigure[Assumed background $\overline{f}$]{
\label{d5}
\includegraphics[width=0.32\linewidth]{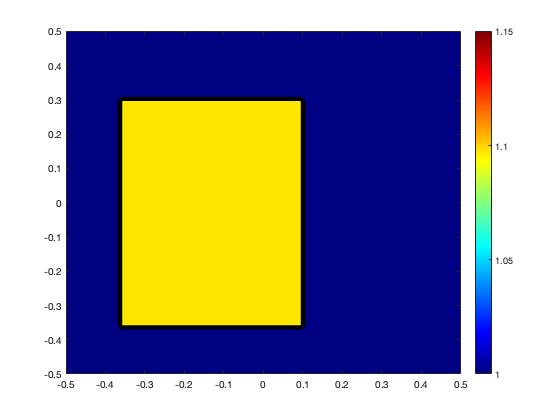}}
\subfigure[Reconstruction]{
\label{d6}
\includegraphics[width=0.32\linewidth]{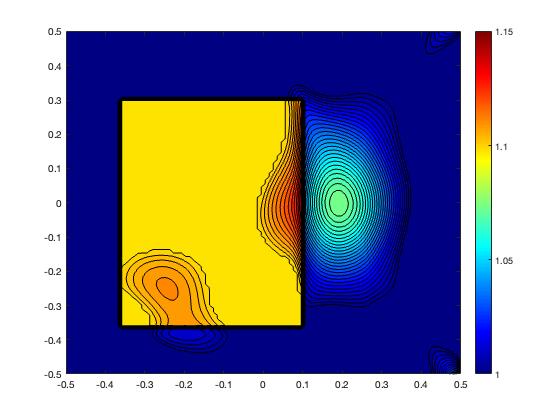}}
\caption{Example \ref{exp:6}. Reconstruction with an assumed discontinuous background slowness function $\overline{f}$}
\end{figure}

\begin{example}\label{exp:7}
In this example we consider the velocity model with the assumed background to be a continuous function \ref{j5}. The exact model in \ref{j4} is different from the assumed background in the regions marked with a white rectangle. 
\end{example}

The exact model of our interest has high contrast with variations of different scales, thus it is difficult to recover the regions related to small scales within the  regions marked with white rectangle.  We can observe from the reconstruction in \ref{j6} that the two inclusions are well separated and the locations captured agrees well with the exact velocity model upon noting the $10\%$ data noise.
\begin{figure}[ht!]
\centering
\subfigure[Slowness function $f$]{
\label{j4}
\includegraphics[width=0.32\linewidth]{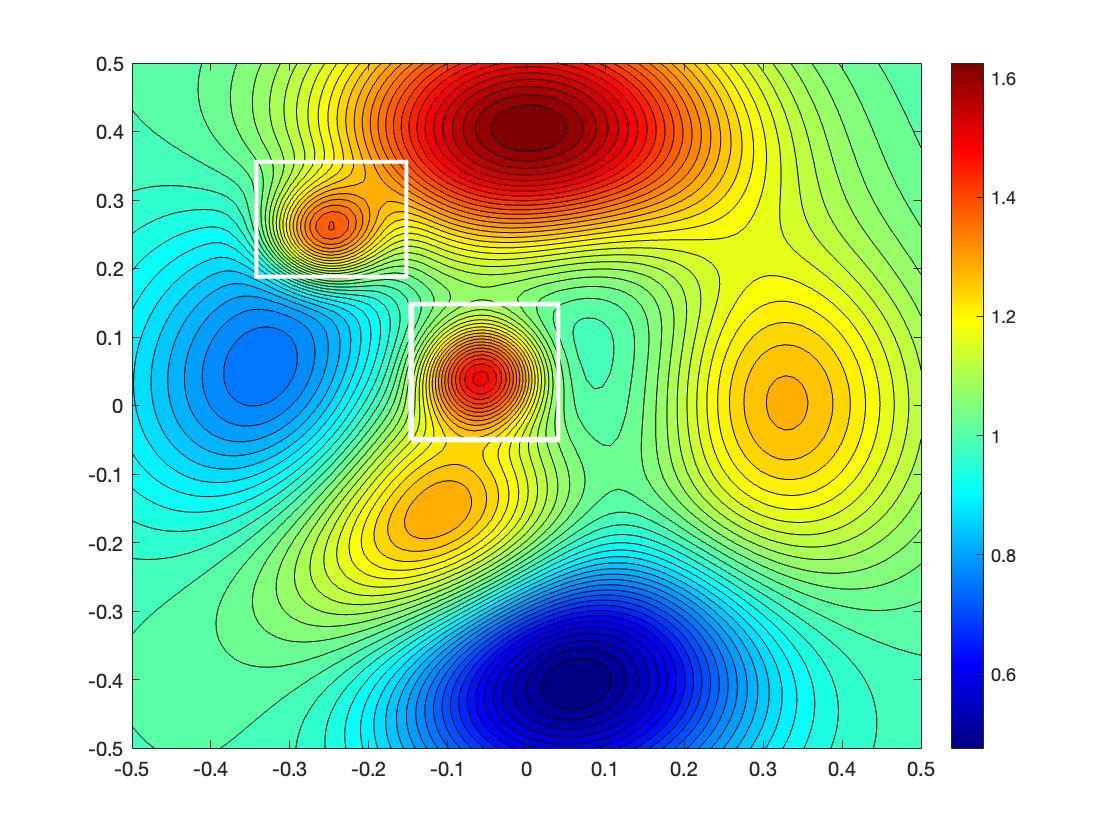}}
\subfigure[Assumed background $\overline{f}$]{
\label{j5}
\includegraphics[width=0.32\linewidth]{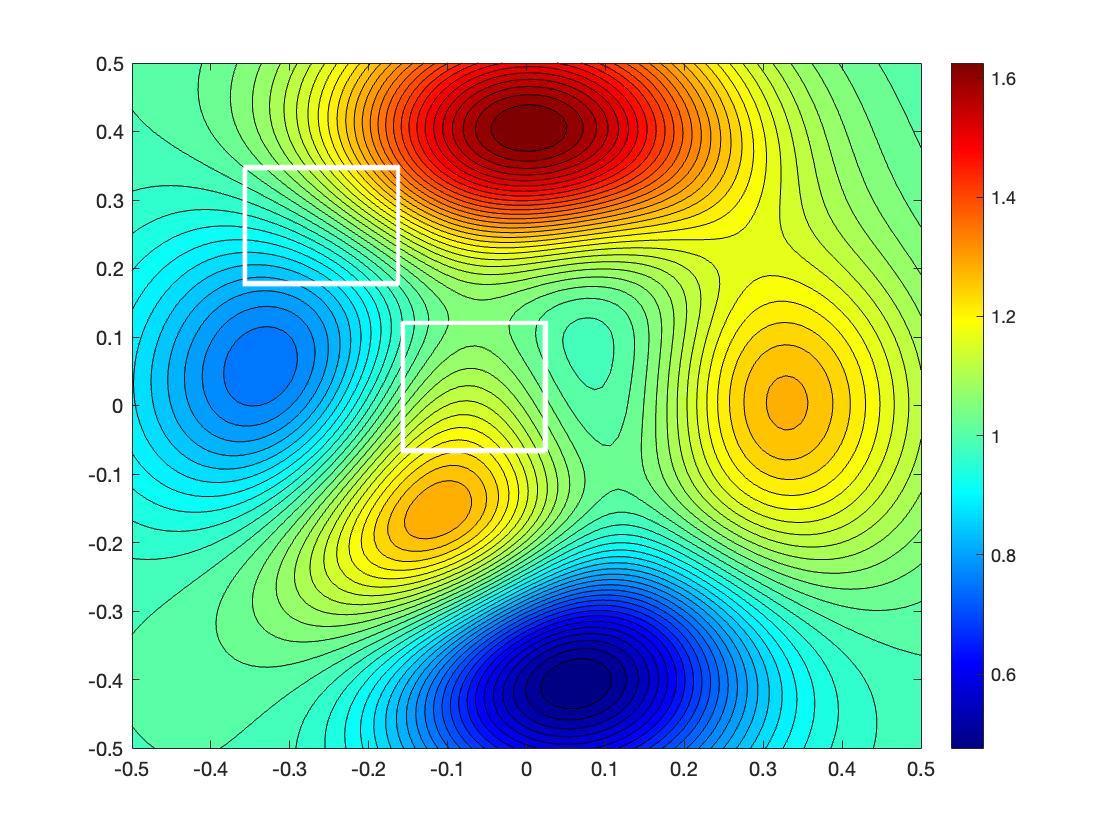}}
\subfigure[Reconstruction]{
\label{j6}
\includegraphics[width=0.32\linewidth]{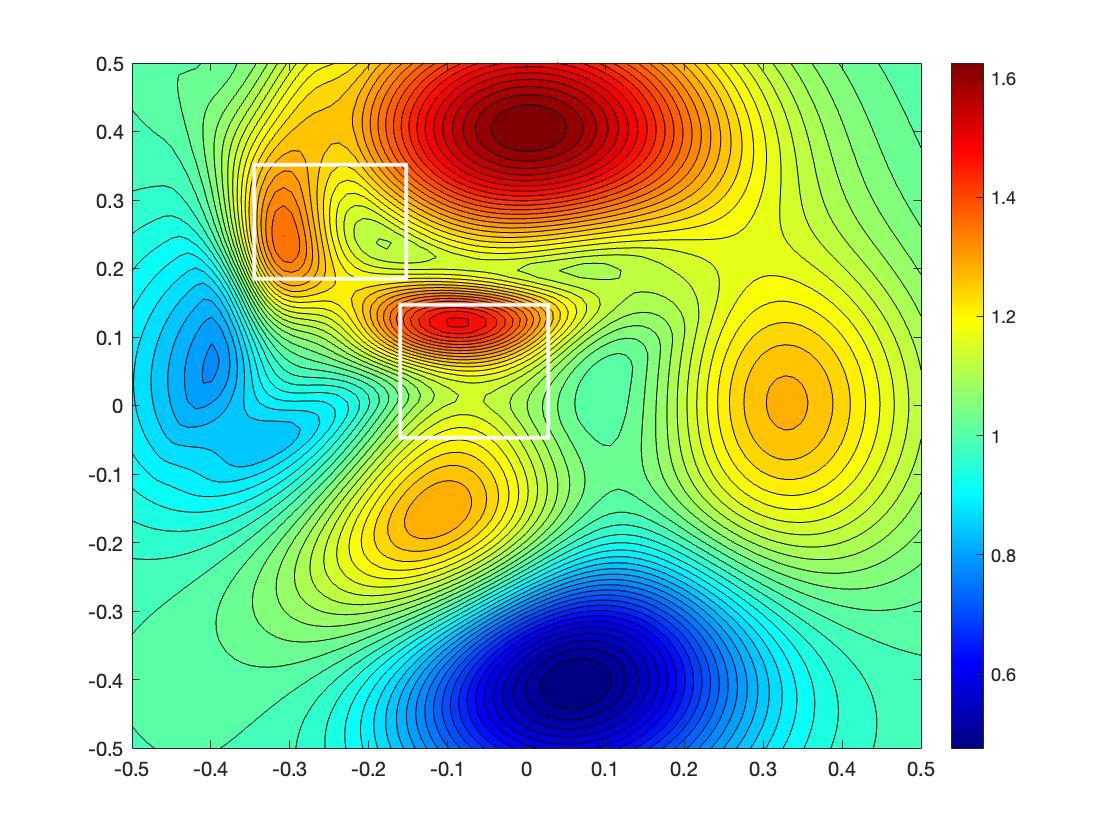}}
\caption{Example \ref{exp:7}. Reconstruction with an assumed discontinuous background slowness function
$\overline{f}$}
\end{figure}

%\bibliographystyle{unsrt}
%\bibliography{ref_Eik}

\end{document}